\def\sign{\mbox{\rm Sgn}}
\def\Proj{\mbox{\rm Proj}}
\def\col{\mbox{\rm col}}
\def\row{\mbox{\rm row}}
\def\And{\mbox{\rm ~and~}}
\def\For{\mbox{\rm ~for~}}
\def\({\mbox{\rm (}}\def\){\mbox{\rm )}}
\def\i{\mbox{\rm (\hspace{0.2mm}i\hspace{0.2mm})}\,}
\def\ii{\mbox{\rm (\hspace{-0.1mm}i\hspace{-0.2mm}i\hspace{-0.1mm})}\,}
\def\iii{\mbox{\rm (\hspace{-0.5mm}i\hspace{-0.3mm}i\hspace{-0.3mm}i\hspace{-0.5mm})}\,}
\def\ccup{\textstyle\bigcup}
\def\Sqcup{\textstyle\bigsqcup\limits}
\def\T{{\scriptscriptstyle T}}
\def\x{{\scriptscriptstyle {\bm x}}}
\def\sst{\scriptscriptstyle}
\theoremstyle{plain}
\newtheorem{theorem}{Theorem}[section]
\newtheorem{lemma}[theorem]{Lemma}
\newtheorem{proposition}[theorem]{Proposition}
\newtheorem{example}[theorem]{Example}
\newtheorem{definition}[theorem]{Definition}
\newtheorem{remark}[theorem]{Remark}
\newtheorem{corollary}[theorem]{Corollary}
\begin{document}

%\tiny
%\scriptsize
%\footnotesize
%\small
%\normalsize
%\large
%\Large
%\LARGE
%\huge
%\Huge

\title{Bijections on $r$-Shi and $r$-Catalan Arrangements}
\author{Houshan Fu\thanks{Supported by Hunan Provincial Innovation Foundation for Postgraduate (CX2018B215)}\\
\small School of Mathematics\\
\small Hunan University\\
\small Changsha, Hunan, China\\
\small fuhoushan@hnu.edu.cn
\and
Suijie Wang\thanks{Supported by NSFC 11871204}\\
\small School of Mathematics\\
\small Hunan University \\
\small Changsha, Hunan, China\\
\small wangsuijie@hnu.edu.cn
\and
Weijin Zhu\\
\small School of Electronic Information and Electrical Engineering\\
\small Shanghai Jiao Tong University \\
\small Shanghai, China\\
\small weijinzhu@sjtu.edu.cn
}

%\linenumbers
\date{}
\maketitle
\begin{abstract}
Associated with the $r$-Shi arrangement and $r$-Catalan arrangement in $\Bbb{R}^n$, we introduce a cubic matrix for each region to establish two bijections in a uniform way. Firstly, the positions of minimal positive entries in column slices of the cubic matrix  will give a bijection from regions of the $r$-Shi arrangement to $O$-rooted labeled $r$-trees. Secondly, the numbers of positive entries in column slices of the cubic matrix will give  a bijection from regions of the $r$-Catalan arrangement to pairings of permutation and $r$-Dyck path.  Moreover, the numbers of positive entries in row slices of the cubic matrix will recover
the Pak-Stanley labeling, a celebrated bijection from regions of the $r$-Shi arrangement to $r$-parking functions.

\vspace{2ex}

\noindent{\small {\bf Keywords:} cubic matrix,  Shi arrangement, Catalan arrangement, Pak-Stanley labeling}
\end{abstract}

\section{Concepts and Backgrounds}
This paper aims to establish two  bijections: from regions of the $r$-Shi arrangement to $O$-rooted labeled $r$-trees, and from regions of the $r$-Catalan arrangement to pairings of permutation and $r$-Dyck path. To this end, we introduce a cubic matrix for each region to read the combinatorial information from the region.

{\bf A hyperplane arrangement} $\mathcal{A}$ is a finite collection of hyperplanes in a vector space $V$, see \cite{Orlik-Terao,Stanley1}. When $V$ is a real space, the set $V\setminus \cup_{H\in \mathcal{A}}H$ consists of finitely many connected components, called {\bf regions} of $\mathcal{A}$. Denote by $\mathcal{R}(\mathcal{A})$ the set of regions of $\mathcal{A}$. For any positive integers $r$ and $n$, the {\bf $r$-Shi arrangement} $\mathcal{S}_n^r$ in $\mathbb{R}^n$ consists of the following hyperplanes
\[
\mathcal{S}_n^r:\; x_i-x_j=-r+1,-r+2,\ldots, 0,1,\ldots, r, \quad  1\le i<j\le n.
\]
The case of $r=1$ is the classical {\bf Shi arrangement} $\mathcal{S}_n$ introduced by Shi \cite{Shi} in 1986. Shi further obtained the number of regions of $\mathcal{S}_n^r$.
\begin{theorem}{\rm \cite{Shi1}}\label{Shi}
For any positive integers $r$ and $n$, the number of regions of $\mathcal{S}_n^r$ is
\[
|\mathcal{R}(\mathcal{S}_n^r)|=(rn+1)^{n-1}.
\]
\end{theorem}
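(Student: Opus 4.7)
The plan is to combine Zaslavsky's theorem with Athanasiadis's finite field method. Zaslavsky's theorem states that for any real arrangement $\mathcal{A}$ in $\mathbb{R}^n$ the number of regions equals $(-1)^n\chi_\mathcal{A}(-1)$, where $\chi_\mathcal{A}$ is the characteristic polynomial, so it suffices to establish the polynomial identity
\[
\chi_{\mathcal{S}_n^r}(t)=t(t-rn)^{n-1},
\]
from which substituting $t=-1$ yields $(-1)^n(-1)(-1-rn)^{n-1}=(rn+1)^{n-1}$.

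Since the hyperplanes of $\mathcal{S}_n^r$ are defined over $\mathbb{Z}$, the finite field method reduces this polynomial identity to the combinatorial claim that for every sufficiently large prime $p$ the number of tuples $(a_1,\ldots,a_n)\in\mathbb{F}_p^n$ satisfying $a_i-a_j\not\equiv k\pmod{p}$ for all $1\le i<j\le n$ and all $-r+1\le k\le r$ equals $p(p-rn)^{n-1}$. By translation invariance I would fix $a_1=0$ (contributing the outer factor of $p$) and reduce to showing that the number of valid configurations with $a_1=0$ equals $(p-rn)^{n-1}$.

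For this count I would model the remaining labels as points on the cycle $\mathbb{Z}/p\mathbb{Z}$. Recording the clockwise cyclic order $\tau$ of the $n$ labels (starting at label $1$) together with the $n$ consecutive cyclic gaps $g_1,\ldots,g_n$ that sum to $p$, the pairwise forbidden conditions translate into $g_k\ge r$ at cyclic ascents of $\tau$ and $g_k\ge r+1$ at cyclic descents; the forbidden conditions between labels non-adjacent in $\tau$ then follow automatically by additivity of gaps along longer arcs. Hence each cyclic order $\tau$ with $d(\tau)$ descents contributes $\binom{p-rn-d(\tau)+n-1}{n-1}$ gap vectors, and summing over the $(n-1)!$ cyclic orders fixing label $1$ yields, via the Eulerian symmetry $A(n-1,d)=A(n-1,n-2-d)$ together with Worpitzky's identity $y^{n-1}=\sum_d A(n-1,d)\binom{y+d}{n-1}$ applied to $y=p-rn$, the closed form $(p-rn)^{n-1}$.

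The main obstacle is this gap analysis. The asymmetry of the forbidden set $\{-r+1,\ldots,r\}$ forces the lower bound at a cyclic descent to exceed that at a cyclic ascent by exactly one, so the count per $\tau$ genuinely depends on $d(\tau)$; only after summing over all cyclic orders does the dependence cancel through the Eulerian/Worpitzky telescope. Carefully verifying this cancellation, and confirming the redundancy of the non-adjacent pairwise constraints, are the delicate points. Once settled, the polynomial identity $\chi_{\mathcal{S}_n^r}(t)=t(t-rn)^{n-1}$ extends to all $t$ by polynomial interpolation, and Zaslavsky's theorem immediately delivers $|\mathcal{R}(\mathcal{S}_n^r)|=(rn+1)^{n-1}$.
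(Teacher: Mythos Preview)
The paper does not prove this statement itself; it is quoted from \cite{Shi1} and used only as an input, together with Theorem~\ref{rooted labeled r-tree}, to conclude that the injective map $\Psi_n^r$ of Theorem~\ref{Main-1} is bijective. Your proposal therefore supplies a proof the paper never attempts. The route you take is essentially Athanasiadis's rather than Shi's: Shi's original argument goes through sign types for the affine Weyl group of type $\tilde A_{n-1}$, while you compute $\chi_{\mathcal{S}_n^r}(t)=t(t-rn)^{n-1}$ via the finite field method and finish with Zaslavsky. Your gap analysis is correct. After fixing $a_1=0$ and reading off the cyclic order starting at label~$1$, the adjacent constraints do reduce to $g_k\ge r$ at cyclic ascents and $g_k\ge r+1$ at cyclic descents; the non-adjacent pairwise constraints, as well as the implicit upper bounds on each $g_k$, are redundant because every arc of the cycle containing at least two gaps has total length at least $2r$. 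Since the wrap-around step $\tau(n)\to 1$ is always a descent and $1\to\tau(2)$ is always an ascent, the cyclic descent count is exactly one more than the linear descent count of the word $\tau(2)\cdots\tau(n)$; with this offset the Eulerian symmetry $A(n-1,d)=A(n-1,n-2-d)$ together with Worpitzky's identity collapses the sum to $(p-rn)^{n-1}$ as you assert. What your approach buys over a bare citation is the full characteristic polynomial (hence also the count of bounded regions); what the paper's bijective machinery would buy, were it made independent of Theorems~\ref{Shi} and~\ref{rooted labeled r-tree}, is an explicit combinatorial labelling of every region.
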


Let $O=\{o_1,\ldots, o_r\}$ and $V=\{v_1,\ldots, v_n\}$ be two disjoint sets of labeled vertices. First introduced by Harary and Palmer \cite{Harary-Palmer} in 1968,  an {\bf $O$-rooted labeled $r$-tree} $T$ on $O\cup V$  is a graph having the property: there is a valid rearrangement $\nu=(v_{i_1},\ldots, v_{i_n})$ of vertices $v_1,\ldots, v_n$,  such that each $v_{i_j}$ with $j\in [n]$ is adjacent to exactly $r$ vertices in $\{o_1,\ldots, o_r, v_{i_1},\ldots, v_{i_{j-1}}\}$ and, moreover, these $r$ vertices are themselves mutually adjacent in $T$. Section 3 will be devoted to some characterizations of the $O$-rooted labeled $r$-trees. Denote by $\mathcal{T}^r_n$ the set of all $O$-rooted labeled $r$-trees. In the case of $r=1$, write $\mathcal{T}_n=\mathcal{T}_n^1$, whose members are called {\bf $O$-rooted labeled trees}. The size of $\mathcal{T}^r_n$  has been counted by Foata \cite{Foata}, Beineke and Pippert \cite{Beineke-Pippert,Beineke-Pippert1}, Gainer-Dewar and Gessel \cite{Andrew-Gessel} etc., which extends the Cayley formula $|\mathcal{T}_n|=(n+1)^{n-1}$ of \cite{Cayley}.

\begin{theorem}\label{rooted labeled r-tree}{\rm \cite{Foata,Beineke-Pippert,Beineke-Pippert1}}
For any positive integers $r$ and $n$, the cardinality of $\mathcal{T}^r_n$ is
\[
|\mathcal{T}^r_n|=(rn+1)^{n-1}.
\]
\end{theorem}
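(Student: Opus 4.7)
The plan is to prove $|\mathcal{T}^r_n|=(rn+1)^{n-1}$ via an explicit generalized Pr\"ufer-type bijection from $\mathcal{T}^r_n$ to the set of length-$(n-1)$ sequences over an $(rn+1)$-element alphabet, mirroring the classical Pr\"ufer proof of Cayley's formula that handles the case $r=1$.

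My first step would be an auxiliary counting lemma: every $O$-rooted labeled $r$-tree on $O\cup V$ contains exactly $rn+1$ distinct $r$-cliques. This is a quick induction on $n$. The base case is the single clique $O$. For the step, each time a new vertex $v$ is attached to an existing $r$-clique $C=\{w_1,\ldots,w_r\}$, the only new $r$-cliques created are $\{v\}\cup(C\setminus\{w_i\})$ for $i\in[r]$, yielding exactly $r$ additional $r$-cliques, so the total grows from $r(n-1)+1$ to $rn+1$.

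The encoding then proceeds as follows. Given $T\in\mathcal{T}^r_n$, repeatedly locate the smallest-indexed \emph{leaf} of $T$ (a vertex of $V$ of degree exactly $r$, which must exist by the construction order $\nu$), record the unique $r$-clique formed by its $r$ neighbors, and delete it; after $n-1$ deletions only one vertex of $V$ remains, producing a sequence $(C_1,\ldots,C_{n-1})$ of $r$-cliques. Inversion reads the sequence from right to left: at each stage the set of previously recorded cliques together with the current partial tree determines both which leaf to reattach and its attachment point uniquely, provided one can identify at each step the smallest label not yet consumed.

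The main obstacle will be arranging the encoding so that the codes live in a fixed alphabet of cardinality exactly $rn+1$, independent of the deletion step, since the current partial tree shrinks and its own clique count decreases. My approach is to label the $rn+1$ $r$-cliques of the \emph{original} tree $T$ by the integers $[rn+1]$ via the construction order implicit in the definition, and record each $C_k$ by its original label; one then verifies that every sequence in $[rn+1]^{n-1}$ arises from a unique $T$. Should this bookkeeping prove delicate, an alternative is to defer the enumeration until after the paper's main bijection between $\mathcal{R}(\mathcal{S}_n^r)$ and $\mathcal{T}^r_n$ is established; combined with Theorem \ref{Shi}, the formula then follows as an immediate corollary, and this theorem becomes an output rather than an input of the paper.
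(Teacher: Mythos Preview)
The paper does not prove this theorem at all: it is quoted from the literature (Foata; Beineke--Pippert) and used purely as an \emph{input}. In particular, the proof of Theorem~\ref{Main-1} in Section~4 establishes only that $\Psi_n^r$ is well defined and injective, and then invokes Theorem~\ref{Shi} and Theorem~\ref{rooted labeled r-tree} together to conclude bijectivity by a cardinality count. The paper even remarks explicitly that it has ``no direct proof on the surjectivity of $\Psi_n^r$ without using Theorem~\ref{Shi} and Theorem~\ref{rooted labeled r-tree}.''

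This means your fallback plan is circular. You cannot ``defer the enumeration until after the paper's main bijection \ldots\ is established'' and then read off $|\mathcal{T}^r_n|$ from $|\mathcal{R}(\mathcal{S}_n^r)|$: the only thing the paper proves independently is that $\Psi_n^r$ is an injection, which combined with Theorem~\ref{Shi} gives only the inequality $|\mathcal{T}^r_n|\ge (rn+1)^{n-1}$. Equality, and hence bijectivity of $\Psi_n^r$, requires the present theorem as a separate input.

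Your primary plan, the generalized Pr\"ufer encoding, is exactly the route taken in the cited references, so in spirit you are reproducing the classical proof rather than anything the paper itself does. Your auxiliary clique-count lemma is correct. You have also correctly identified the genuine difficulty: the set of $r$-cliques of $T$ has the right size $rn+1$ but depends on $T$, so it cannot serve directly as a fixed alphabet. Your proposed fix---labeling cliques by the construction order $\nu$---does not yet resolve this, because by Proposition~\ref{proposition-2} the valid rearrangement $\nu$ is not unique, and in any case the labeling still depends on $T$; what is needed is a $T$-independent indexing of the recorded cliques, which in the classical proofs is arranged with some care (e.g., by recording at each step not the clique itself but a pair consisting of a canonical vertex of the clique together with an element of $[r]$, or equivalently by encoding via $r$-parking functions). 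So the Pr\"ufer route is viable but the bookkeeping you flag as ``delicate'' is the whole content of the proof and is not yet supplied.
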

Closely related to $O$-rooted labeled $r$-trees and $r$-Shi arrangement, the {\bf $r$-parking function of length $n$} is a sequence $\alpha=(\alpha_1,\ldots,\alpha_n)\in \Bbb{Z}_{\ge 0}^n$ such that the monotonic rearrangement $a_1\le a_2\le\cdots\le a_n$ of the numbers $\alpha_1,\ldots,\alpha_n$ satisfies $a_i\le r(i-1)$. Denote by $\mathcal{P}_n^r$ the set of all $r$-parking functions of length $n$. In the case of $r=1$, write $\mathcal{P}_n=\mathcal{P}_n^1$ whose members are called {\bf parking functions of length $n$}.  Explored by Pitman and Stanley \cite{Stanley-Pitman}, Yan \cite{Yan1} etc.,  the cardinality of $\mathcal{P}_n^r$ is exactly the same as $\mathcal{R}(\mathcal{S}_n^r)$ and $\mathcal{T}^r_n$, namely,
\[
|\mathcal{P}_n^r|=(rn+1)^{n-1}.
\]

Naturally we may ask if there are some bijections among $\mathcal{R}(\mathcal{S}_n^r)$, $\mathcal{T}^r_n$, and $\mathcal{P}_n^r$. A celebrated bijection $\mathcal{R}(\mathcal{S}_n^r)\to \mathcal{P}_n^r$ (abbreviation for `from $\mathcal{R}(\mathcal{S}_n^r)$ to $\mathcal{T}_n^r$') is the Pak-Stanley labeling which was first suggested  by I. Pak in the case of $r=1$,  and extended to general $r$  by R. P. Stanley \cite{Stanley2,Stanley3}. Later, relevant to the Pak-Stanley labeling, many results on bijections $\mathcal{R}(\mathcal{S}_n^r)\to \mathcal{P}_n^r$ have been obtained, see \cite{Athanasiadis-Linusson,Beck,Duarte-Oliveira,Mazin,Postnikov-Stanley,
Stanley1,Stanley2,Stanley3} etc.. For the bijection $\mathcal{P}_n^r\to\mathcal{T}_n^r$, currently we just know that it can be established by a composition of three other bijections given in \cite{Pak-Postnikov} by I. Pak and A. Postnikov. In the case of $r=1$, bijections $\mathcal{P}_n\to \mathcal{T}_n$  have been well studied  since 1968,  see  \cite{Francon,Foata-Riordan,Kreweras,Knuth,Riordan,Schutzenberger} etc..   To the best of our knowledge, no explicit bijection $\mathcal{R}(\mathcal{S}_n^r)\to \mathcal{T}_n^r$  has been established, which is exactly our motivation of this paper. Our first main result is to establish a bijection $\mathcal{R}(\mathcal{S}_n^r)\to \mathcal{T}_n^r$, see Theorem \ref{Main-1}. To this end, we will introduce a cubic matrix for $r$-Shi arrangement, which will also let us define the Pak-Stanley labeling in an easy way, see Theorem \ref{Main-2}.

Surprisingly, the cubic matrix method can be applied to the {\bf $r$-Catalan arrangement} $\mathcal{C}_n^r$ in $\mathbb{R}^n$, the collection of hyperplanes
\[
\mathcal{C}_n^r:\quad x_i-x_j=0,\pm 1,\ldots,\pm r,\quad \mbox{for}\;1\le i<j\le n.
\]
When $r=1$, denote $\mathcal{C}_n=\mathcal{C}_n^1$, called the {\bf Catalan arrangement}. The number of regions of $\mathcal{C}_n^r$ was first obtained by Athanasiadis \cite{Athan2004} in 2004.
\begin{theorem}\label{Catalan}{\rm \cite{Athan2004}}
For any positive integers $r$ and $n$, the number of regions of $\mathcal{C}_n^r$ is
\[
|\mathcal{R}(\mathcal{C}_n^r)|=n!C(n,r)=\frac{n!}{rn+1}\binom{rn+n}{n}.
\]
\end{theorem}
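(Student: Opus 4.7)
The plan is to compute the characteristic polynomial $\chi(\mathcal{C}_n^r,t)$ of the intersection lattice and then appeal to Zaslavsky's theorem $|\mathcal{R}(\mathcal{C}_n^r)|=(-1)^n\chi(\mathcal{C}_n^r,-1)$. To evaluate $\chi(\mathcal{C}_n^r,q)$ I would use the finite field method of Athanasiadis: for every sufficiently large prime $q$, $\chi(\mathcal{C}_n^r,q)$ equals the number of tuples $(x_1,\ldots,x_n)\in\mathbb{F}_q^n$ with $x_i-x_j\not\equiv 0,\pm1,\ldots,\pm r\pmod{q}$ for all $1\le i<j\le n$.

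The next step is a cyclic gap count. I would view $x_1,\ldots,x_n$ as placing $n$ labeled markers on the cycle $\mathbb{Z}/q\mathbb{Z}$ with all pairwise cyclic distances exceeding $r$. Forgetting the labels costs a factor of $n!$, so the problem reduces to counting $n$-subsets $S\subset\mathbb{Z}/q\mathbb{Z}$ with this property. Listing $S$ in cyclic order and letting $d_1,\ldots,d_n$ be the consecutive cyclic gaps (summing to $q$), the key observation is that the pairwise condition is equivalent to the much simpler requirement $d_i\ge r+1$ for each $i$, because any cyclic arc between two elements of $S$ is a sum of consecutive $d_k$'s and is therefore automatically $>r$ in both directions around the cycle.

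To count such subsets, I would use a standard pointing/translation argument: the map $(S,s)\mapsto S-s$ from pointed valid subsets to valid subsets containing $0$ is $q$-to-$1$, and valid subsets containing $0$ are in bijection with compositions of $q$ into $n$ parts each at least $r+1$, whose number is $\binom{q-nr-1}{n-1}$ by stars-and-bars. Assembling gives
$$\chi(\mathcal{C}_n^r,q)=q(q-nr-1)(q-nr-2)\cdots(q-nr-n+1),$$
a polynomial identity valid for all $q$ by interpolation from infinitely many primes. Plugging in $q=-1$ and multiplying by $(-1)^n$ produces $(rn+2)(rn+3)\cdots(rn+n)=\frac{(rn+n)!}{(rn+1)!}$, which coincides with $\frac{n!}{rn+1}\binom{rn+n}{n}$ as desired.

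The main obstacle, in my view, is justifying the finite field reduction itself—verifying that for primes $q$ large enough, no hyperplanes in $\mathcal{C}_n^r$ accidentally coincide modulo $q$ and no intersections of hyperplanes collapse, so that the $\mathbb{F}_q^n$-count really does detect the characteristic polynomial. Once this reduction is in hand, the rest is a pleasant cyclic composition count.
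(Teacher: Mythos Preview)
Your argument is correct and is in fact Athanasiadis's original approach via the finite field method and Zaslavsky's theorem. There is nothing to compare against here: the present paper does not prove this theorem at all but simply quotes it from \cite{Athan2004} as a known input. The paper's own contribution on the Catalan side is the bijection $\Phi_n^r:\mathcal{R}(\mathcal{C}_n^r)\to\mathfrak{S}_n\times\mathcal{D}_n^r$ of Theorem~\ref{Main-3}, which would in principle furnish a bijective proof of the region count, but as written that argument only establishes injectivity and then invokes Theorems~\ref{Catalan} and~\ref{Dyck} to conclude; so it does not supersede the cited result.

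On your stated obstacle: for integral arrangements such as $\mathcal{C}_n^r$ the finite field reduction is standard. One only needs $q$ large enough that the $\mathbb{F}_q$-reductions of the hyperplanes remain distinct and that no additional incidences are created; for the Catalan arrangement any prime $q>n(r+1)$ already suffices, since the intersection poset is governed by integer differences bounded by $nr$. Once that is granted, your cyclic gap count and the stars-and-bars evaluation of $\binom{q-nr-1}{n-1}$ go through exactly as you wrote, yielding $\chi(\mathcal{C}_n^r,q)=q(q-nr-1)\cdots(q-nr-n+1)$ and hence the stated region count.
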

In Theorem \ref{Catalan}, the number $C(n,r)=\frac{1}{rn+1}\binom{rn+n}{n}$ is called the {\bf Fuss-Catalan number} or {\bf Raney number}, which counts the number of the $r$-Dyck paths of length $n$. As written in \cite{Koshy-book}, the Fuss-Catalan number was first studied by Fuss \cite{Fuss1791} in 1791, forty-seven years before Catalan investigated the parenthesization problem, see \cite{Forres, Graham, He, Liu, Mlot, Mlot2013, Penson, Koshy-book} for more results on the Fuss-Catalan number. The paper \cite{Mansour2008} presented several combinatorial structures which are counted by Fuss-Catalan numbers. In the case of $r=1$, $C(n,r)=C_n=\frac{1}{n+1}\binom{2n}{n}$ is the {\bf Catalan number}, see \cite{Stanley4} for a complete investigation on the Catalan number. Dyck path has many generalizations that have been widely studied in the past, see \cite{Cameron-Mcleod, Duchon, Fukukawa, Imaoka, Labelle, Labelle-Yeh, Ma, Mansour, Rukavicka}. As a generalization of Dyck path, a {\bf $r$-Dyck path of length $n$} is a lattice path in the $x$-$y$ plane moving from $(0,0)$ to $(n,rn)$ with steps $(1,0)$ and $(0,1)$ and never going above the line $y=rx$.  Denote by $\mathcal{D}_n^r$ the collection of all $r$-Dyck paths of length $n$ and $\mathcal{D}_n=\mathcal{D}_n^1$ the set of all {\bf Dyck paths} of length $n$.  In 1989, Krattenthaler \cite{Kratten} obtained the number of $r$-Dyck paths of length $n$.
\begin{theorem}\label{Dyck}{\rm \cite{Kratten}}
For any positive integers $r$ and $n$, the cardinality of $\mathcal{D}_n^r$ is
\[
|\mathcal{D}_n^r|=C(n,r).
\]
\end{theorem}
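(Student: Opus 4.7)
The plan is to deduce the count via Raney's cycle lemma and a simple bijection onto lattice paths. Consider linear sequences $(a_1,\ldots,a_{(r+1)n+1})$ of length $(r+1)n+1$ containing exactly $rn+1$ entries equal to $+1$ and $n$ entries equal to $-r$; there are $\binom{(r+1)n+1}{n}$ such sequences, each of total sum $+1$. By the Dvoretzky--Motzkin cycle lemma, among the $(r+1)n+1$ cyclic shifts of any such sequence exactly one has all partial sums strictly positive. Since $\gcd(n,rn+1)=1$, no such sequence can be fixed by a nontrivial cyclic shift, so every orbit has full length $(r+1)n+1$; dividing the total count by the orbit size therefore yields
\[
\frac{1}{(r+1)n+1}\binom{(r+1)n+1}{n}=\frac{((r+1)n)!}{n!\,(rn+1)!}=\frac{1}{rn+1}\binom{(r+1)n}{n}=C(n,r)
\]
``positive'' sequences.

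Next I would biject these positive sequences with $\mathcal{D}_n^r$. In any positive sequence the leading entry must be $+1$, since the alternative $-r$ would force the first partial sum to be negative. Deleting the leading $+1$ yields a sequence of length $(r+1)n$ with $rn$ entries of $+1$ and $n$ entries of $-r$ whose partial sums are all $\ge 0$; reading each $+1$ as a north step $(0,1)$ and each $-r$ as an east step $(1,0)$ produces a lattice path from $(0,0)$ to $(n,rn)$ that stays weakly above the line $y=rx$. Reversing the step sequence of such a path produces another path from $(0,0)$ to $(n,rn)$ that stays weakly below $y=rx$, i.e., an element of $\mathcal{D}_n^r$, and this reversal is plainly involutive and therefore a bijection.

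The delicate point is the cycle-lemma bookkeeping: one must choose signs so that each entry is at most $1$ (hence $+1$ for north and $-r$ for east, not the reverse), and augment the path length by one extra north step so that the total sum is strictly positive, because without these adjustments the standard formulation of the cycle lemma does not apply. Once these choices are made, the only genuine thing to verify is that no cyclic orbit collapses, which follows immediately from $\gcd(n,rn+1)=1$; every other step---the deletion of the leading $+1$ and the reversal of the resulting step sequence---is routine, which is why the theorem reduces to a single clean invocation of Raney's lemma.
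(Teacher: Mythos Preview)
Your argument is correct. The cycle-lemma count is sound: sequences of length $(r+1)n+1$ with $rn+1$ copies of $+1$ and $n$ copies of $-r$ have sum $1$ and entries bounded above by $1$, so Raney/Dvoretzky--Motzkin applies and yields $\frac{1}{(r+1)n+1}\binom{(r+1)n+1}{n}=C(n,r)$ positive sequences. One small remark: the $\gcd$ step is not actually needed. The double-counting form of the cycle lemma already gives $|P|=\frac{k}{m}|S|$ directly (summing, over all sequences, the number of positive rotations in two ways), regardless of whether any orbit collapses; your coprimality observation is true but superfluous. The bijection with $\mathcal{D}_n^r$ is also fine: stripping the forced leading $+1$ gives a path weakly above $y=rx$, and the reversal involution sends it to a path weakly below $y=rx$, matching the paper's convention for $\mathcal{D}_n^r$.

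As for comparison with the paper: there is nothing to compare. The paper does not prove this theorem; it is quoted as a known result from Krattenthaler \cite{Kratten} and used only as input to the cardinality argument in Theorem~\ref{Main-3}. Your cycle-lemma proof is one of the standard self-contained routes to the Fuss--Catalan count, so it is a perfectly acceptable substitute for the citation.
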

As our second main result,  we will establish a bijection $\mathcal{R}(\mathcal{C}_n^r)\to \mathfrak{S}_n\times \mathcal{D}_n^r$ in Theorem \ref{Main-3} via the cubic matrix defined for the $r$-Catalan arrangement, which will extend the bijection defined in \cite[p. 69]{Stanley1}.

\section{Main Results}
Our first main result  is a bijection  $\mathcal{R}(\mathcal{S}_n^r)\to \mathcal{T}_n^r$, which will be stated in Section 2.1 and proved in Section 4.  The second main result is a bijection $\mathcal{R}(\mathcal{C}_n^r)\to \mathfrak{S}_n\times\mathcal{D}_n^r$ and will be  given in Section 2.2.
\subsection{Bijection  $\mathcal{R}(\mathcal{S}_n^r)\to \mathcal{T}_n^r$}
By introducing  a cubic matrix for $r$-Shi arrangement, in this section we establish a bijection $\mathcal{R}(\mathcal{S}_n^r)\to \mathcal{T}_n^r$ and present a straightforward way to view the Pak-Stanley labelling. Given a region $\Delta\in \mathcal{R}(\mathcal{S}_n^r)$ and a representative ${\bm x}=(x_1,x_2,\ldots,x_n)\in \Delta$, define the $\mathbf{cubic\; matrix}$ $C_{\bm x}= \big(c_{ijk}({\bm x})\big)\in \Bbb{R}^{n\times n \times r}$ to be
\begin{equation}\label{cubic-matrix}
c_{ijk}({\bm x}) =
\begin{cases}
x_i-x_j-k,& \mbox{if}\; i<j;  \\
0 ,         & \mbox{if}\; i=j;  \\
x_i-x_j-k+1 , & \mbox{if}\; i>j,
\end{cases}
\end{equation}
which is an $r$-tuple of square matrices as the index $k$ running from $1$ to $r$. For any $i,j\in [n]$, let
\[
\row_i(C_{\bm x})=\big(c_{ijk}(\bm x)\big)_{j\in [n],k\in [r]}\quad \And \quad \col_j(C_{\bm x})=\big(c_{ijk}(\bm x)\big)_{i\in [n],k\in [r]},
\]
called the {\bf $i$-th row slice} and {\bf $j$-th column slice} of $C_{\bm x}$ respectively. Note that each hyperplane $H\in \mathcal{S}_n^r$ is exactly defined by the equation $H: c_{ijk}({\bm x})=0$ for some $i,j$ and $k$, and all points of $\Delta$ lie in the same side of $H$ since $\Delta\cap H=\emptyset$. It follows that $c_{ijk}({\bm x})$ has the same sign for all ${\bm x}\in \Delta$, namely, $\sign\big(c_{ijk}({\bm x})\big)$ is independent of the choice of representatives ${\bm x}\in \Delta$ and can be denoted by
\begin{equation}\label{r-sign-func}
\sign_{ijk}(\Delta)=\sign\big(c_{ijk}(\bm x)\big).
\end{equation}
Then $\sign(\Delta)= \big(\sign_{ijk}(\Delta)\big)$ automatically defines a bijection
\begin{equation}\label{r-sign-bijection}
\sign: \mathcal{R}(\mathcal{S}_n^r)\to \{\sign(\Delta)\mid \Delta\in \mathcal{R}(\mathcal{S}_n^r)\}.
\end{equation}
The symbol ${\bm x}$ is understand as either a point of $\Bbb{R}^n$ or indeterminate depending on its meaning in the context.
\begin{definition}\label{definition}
Let $O=\{o_1,\ldots, o_r\}$ and $V=\{v_1,\ldots, v_n\}$ be two disjoint sets of labeled vertices. Given a region $\Delta\in \mathcal{R}(\mathcal{S}_n^r)$ and  ${\bm x}\in \Delta$, for any $j\in [n]$, let $f(v_j)=(f_1(v_j),\ldots,f_r(v_j))\in (O\cup V)^r$ be defined recursively as follows,
\begin{itemize}
\item [\i] if all entries of $\col_j(C_{\bm x})$ are nonpositive, let $p_j=0$ and
             \[f(v_j)=(o_1,o_2,\ldots,o_r),\]
\item [\ii] otherwise, $p_j\ne 0$ and $\col_j(C_{\bm x})$ has a unique minimal positive entry at $(p_j, q_j)$, let
\begin{equation*}\label{f-vector}
f(v_j)=\big(f_1(v_{p_j}),\ldots, f_{q_j-1}(v_{p_j}), f_{q_j+1}(v_{p_j}), \ldots, f_{r}(v_{p_j}), v_{p_j}\big),
\end{equation*}
\end{itemize}
and let the map $F: V\to {O\cup V\choose r}$ with
\[
F(v_j)=\{f_i(v_j)\mid i\in [r]\}.
\]
Define the graph $T_{\bm x}$ on the vertex set $O\cup V$ such that  the vertex $v_j$ and vertices in $F(v_j)$ form an $(r+1)$-clique for all $j\in [n]$.
\end{definition}

Below is the first main result of this paper, whose proof is highly nontrivial and will be given in Section 4.
\begin{theorem}\label{Main-1}
With the same notations as Definition {\rm \ref{definition}},  the following map is a bijection,
\begin{equation}\label{r-Psi}
\Psi_n^r :\mathcal{R}(\mathcal{S}_n^r)\rightarrow \mathcal{T}^{r}_{n}, \quad\quad \Psi_n^r(\Delta)=T_{\bm x}\quad {\rm ~for ~any~} {\bm x}\in \Delta.
\end{equation}
\end{theorem}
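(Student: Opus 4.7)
The plan is to verify three things in sequence: (i) the map $\Psi_n^r$ is well defined, (ii) its image actually lies in $\mathcal{T}_n^r$, and (iii) $\Psi_n^r$ is a bijection.

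For well-definedness, I must check that $T_{\bm x}$ depends only on the region $\Delta\ni{\bm x}$. Each sign $\sign(c_{ijk}({\bm x}))$ is constant on $\Delta$ since $c_{ijk}=0$ defines a hyperplane of $\mathcal{S}_n^r$. The subtler observation is that the \emph{relative order} of two positive entries in the same column slice is also constant on $\Delta$: for $i,i'\neq j$, a four-case check in $\{i,i'\}$ versus $j$ shows the difference $c_{ijk}({\bm x})-c_{i'jk'}({\bm x})$ reduces to $x_i-x_{i'}$ plus an integer offset inside $\{-r+1,\ldots,r\}$, which is exactly the offset of a hyperplane present in $\mathcal{S}_n^r$ (possibly after swapping the two indices), so its sign is determined by $\Delta$. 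Thus the minimum positive entry of each column slice is unique and its location $(p_j,q_j)$ is an invariant of $\Delta$, so $T_{\bm x}$ is well defined.

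Next I will verify $T_{\bm x}\in\mathcal{T}_n^r$. The braid hyperplanes $x_i=x_j$ lie in $\mathcal{S}_n^r$, so within $\Delta$ the coordinates are all distinct; sort them as $x_{i_1}>x_{i_2}>\cdots>x_{i_n}$. I claim $(v_{i_1},\ldots,v_{i_n})$ is a valid rearrangement. Induct on $j$: if $p_{i_j}=0$ then $F(v_{i_j})=O$ is already an $r$-clique inside the root clique; if $p_{i_j}\neq 0$, positivity of $c_{p_{i_j},i_j,q_{i_j}}({\bm x})$ forces $x_{p_{i_j}}>x_{i_j}$, so $v_{p_{i_j}}\in\{v_{i_1},\ldots,v_{i_{j-1}}\}$, and by the inductive hypothesis $F(v_{p_{i_j}})\cup\{v_{p_{i_j}}\}$ is an $(r+1)$-clique lying inside $\{o_1,\ldots,o_r,v_{i_1},\ldots,v_{i_{j-1}}\}$; then $F(v_{i_j})$ is by definition the $r$-subset of this clique obtained by discarding $f_{q_{i_j}}(v_{p_{i_j}})$ and inserting $v_{p_{i_j}}$, hence itself an $r$-clique inside the allowed set.

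For bijectivity I will exploit the cardinality identity $|\mathcal{R}(\mathcal{S}_n^r)|=(rn+1)^{n-1}=|\mathcal{T}_n^r|$ coming from Theorems \ref{Shi} and \ref{rooted labeled r-tree}, reducing the task to constructing a left inverse $\Phi_n^r:\mathcal{T}_n^r\to\mathcal{R}(\mathcal{S}_n^r)$ with $\Psi_n^r\circ\Phi_n^r=\id$. I proceed by induction on $n$; the case $n=1$ is trivial since $\mathcal{S}_1^r=\emptyset$ and the only tree is the $(r{+}1)$-clique. The key recognition lemma is: if $v_M$ is the vertex with the smallest coordinate in $\Delta$, then $c_{M,j,k}({\bm x})\le 0$ for all $j\neq M$, $k\in[r]$ (direct from $x_M<x_j$), so $v_M$ is a simplicial vertex of $T_{\bm x}$ belonging to no $F(v_j)$; moreover if two indices $a<b$ both have their rows of $C_{\bm x}$ entirely nonpositive, then $c_{b,a,1}({\bm x})=x_b-x_a\le 0$ forces $x_b<x_a$, so the vertex $v_M$ is characterized in $T$ as the \emph{largest}-indexed simplicial vertex absent from every $F(v_j)$.

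Given this, $\Phi_n^r$ is defined recursively: identify $v_M$ in $T\in\mathcal{T}_n^r$ by the tree criterion above, set $T^*:=T\setminus\{v_M\}\in\mathcal{T}_{n-1}^r$, let $\Delta^*:=\Phi_{n-1}^r(T^*)$, and lift back to $\Delta\in\mathcal{R}(\mathcal{S}_n^r)$ by inserting $x_M$ below all other coordinates in the unique sub-region where the minimum positive entry of column $M$ of $C_{\bm x}$ sits at the position $(p_M,q_M)$ encoded by $F(v_M)$. Then $\Psi_n^r(\Delta)=T$ follows because row $M$ is nonpositive (hence column slices for $j\neq M$ are unchanged from those of $\Delta^*$) while column $M$ is arranged exactly to match $F(v_M)$. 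The principal obstacle in this plan is the lifting step: one must justify that the prescribed sub-region exists and is unique, which I will handle by showing that the extensions of $\Delta^*$ in which $x_M$ is the smallest coordinate are in bijection with the $r$-cliques of $T^*$ (parameterizing the possible choices of $F(v_M)$), thereby closing the induction consistently with the cardinality identity.
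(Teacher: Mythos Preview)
Your well-definedness argument and the verification that $T_{\bm x}\in\mathcal{T}_n^r$ are correct and parallel the paper's Proposition~\ref{r-well-defined} (the paper phrases the second part via the characterization in Proposition~\ref{proposition-3}, while you exhibit a valid rearrangement directly; both work).

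The bijectivity argument, however, contains a genuine gap: your ``recognition lemma'' is false. The implication you actually prove is that among indices whose \emph{row} of $C_{\bm x}$ is entirely nonpositive, the largest one has the smallest coordinate. But ``$v_a$ absent from every $F(v_j)$'' only means $a\ne p_j$ for all $j$; it does \emph{not} force row $a$ to be entirely nonpositive. Concretely, take $n=3$, $r=1$, and the region $\Delta$ given by $x_2<x_3<x_1$, $x_1-x_2>1$, $x_1-x_3<1$. One checks $p_1=0$, $p_2=1$, $p_3=0$, so $F(v_1)=F(v_3)=\{o_1\}$, $F(v_2)=\{v_1\}$, and both $v_2,v_3$ are absent from every $F(v_j)$. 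Your rule picks $v_M=v_3$, yet the smallest coordinate is $x_2$; indeed row $3$ has the positive entry $c_{3,2,1}({\bm x})=x_3-x_2>0$. Your lift then fails outright: deleting $v_3$ gives $\Delta^*=\{x_1-x_2>1\}$, and no extension with $x_3<x_2$ can have $p_3=0$, since $x_1-x_3>x_1-x_2>1$ forces $c_{1,3,1}({\bm x})>0$.

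More broadly, the lifting step you single out as the ``principal obstacle'' is exactly the content of Corollary~\ref{r-existence}, which the paper obtains \emph{as a consequence} of bijectivity and explicitly states it does not know how to prove directly. The paper therefore goes the other way: it proves \emph{injectivity} of $\Psi_n^r$ by induction on $n$ (Proposition~\ref{r-injection}). One picks \emph{any} $v_{j'}\notin\bigcup_{j}F(v_j)$ (no canonical identification is needed), projects two candidate regions to $\mathcal{R}(\mathcal{S}_{n-1}^r)$, applies the inductive hypothesis, and then invokes the key uniqueness Lemma~\ref{r-uniqueness} (whose proof is a careful sign analysis via Lemma~\ref{r-sign-lemma}) to conclude the two regions coincide. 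Bijectivity then follows from the cardinality equality supplied by Theorems~\ref{Shi} and~\ref{rooted labeled r-tree}.
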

In the case of $r=1$, the statements of Definition \ref{definition} and Theorem \ref{Main-1} become quiet simple, see Corollary \ref{Main-1-1}.
\begin{corollary}\label{Main-1-1}
Let $O=\{o_1\}$ and $V=\{v_1,\ldots, v_n\}$ be two disjoint sets of labeled vertices. Given a region $\Delta$ of $\mathcal{S}_n$, for any ${\bm x}\in\Delta$,  define an $n\times n$ matrix $A_{\bm x}=\big(a_{ij}({\bm x})\big)$ with
\begin{equation*}
a_{ij}({\bm x}) =
\begin{cases}
x_i-x_j-1,& {\rm if}\; i<j;  \\
0 ,       & {\rm if}\; i=j;  \\
x_i-x_j , & {\rm if}\; i>j,
\end{cases}
\end{equation*}
and a graph $T_{\bm x}$ on  $O\cup V$ such that for each $j\in [n]$, $v_j$ is adjacent to $v_{p_j}$, where $p_j$ is defined as follows,
\begin{itemize}
\item[$\i$] if column $j$ of $A_{\bm x}$ has no positive entry, assume $p_j=0$ and $v_0=o_1$;
\item[$\ii$] otherwise, column $j$ of $A_{\bm x}$ has a unique minimal positive entry at row $p_j$.
\end{itemize}
Then $T_{\bm x}$ is an $O$-rooted labeled tree and independent of the choice of representatives ${\bm x}\in \Delta$. Moreover, the map $\Psi_n: \mathcal{R}(\mathcal{S}_n)\to \mathcal{T}_n$ with $\Psi_n(\Delta)=T_{\bm x}$ is a bijection.
\end{corollary}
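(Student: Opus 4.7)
The plan is to verify three properties in order: (a) the parent index $p_j$ is a genuine invariant of $\Delta$, (b) the resulting graph $T_{\bm x}$ is always a tree on $O\cup V$ rooted at $o_1$, and (c) the induced map $\Psi_n$ is a bijection. Because $|\mathcal{R}(\mathcal{S}_n)|=(n+1)^{n-1}=|\mathcal{T}_n|$ by Theorem \ref{Shi} and Theorem \ref{rooted labeled r-tree}, step (c) reduces to proving injectivity alone.

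For (a), the sign of each entry $a_{ij}({\bm x})$ is constant on $\Delta$, since the locus $a_{ij}({\bm x})=0$ is a hyperplane of $\mathcal{S}_n$. A short case split shows that for any two row indices $i,i'$, the difference $a_{ij}({\bm x})-a_{i'j}({\bm x})$ equals $x_i-x_{i'}$, $x_i-x_{i'}-1$, or $x_i-x_{i'}+1$, each of which is either a defining linear form of $\mathcal{S}_n$ or its negation. Hence both the set of positive entries in column $j$ and the ordering among them are constant on $\Delta$, so the unique minimal positive entry, and therefore $p_j$, depends only on $\sign(\Delta)$.

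For (b), $T_{\bm x}$ has $n+1$ vertices and exactly $n$ edges, so acyclicity is enough. If $p_j\neq 0$ then $a_{p_j,j}({\bm x})>0$ yields $x_{p_j}>x_j$ in both cases of the piecewise formula; adopting the convention $x_0=+\infty$, the parent map $j\mapsto p_j$ strictly increases the $x$-coordinate, so iterating parents from any $v_j$ must reach $o_1$ without repetition. This simultaneously produces a valid ordering of $V$ by ascending $x$-coordinate in which each $v_{i_j}$ is adjacent to exactly one earlier vertex, matching the $r=1$ instance of the $O$-rooted labeled $r$-tree definition.

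For injectivity I would recover $\sign(\Delta)$ from $T_{\bm x}$ by induction on the depth from $o_1$. At depth one, $p_j=0$ forces every $a_{ij}({\bm x})$ with $i\neq j$ to be strictly negative. At deeper levels, the fact that $a_{p_j,j}({\bm x})$ is the minimum positive entry of column $j$, combined with the constant-sign differences $a_{ij}({\bm x})-a_{p_j,j}({\bm x})$ from step (a), should pin down $\sign\bigl(a_{ij}({\bm x})\bigr)$ for every row $i$ once signs in column $p_j$ are known. Injectivity then gives the bijection via \eqref{r-sign-bijection} and the matching cardinalities. The main obstacle will be organizing this sign-propagation cleanly across the $i<j$ versus $i>j$ case split and ensuring that the induction hypothesis supplies exactly the signs needed at each step; this is precisely the technical heart of the general argument deferred to Section 4, of which the present corollary is the clean $r=1$ specialization.
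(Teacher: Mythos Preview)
Your steps (a) and (b) are correct and essentially coincide with the $r=1$ specialization of the paper's Proposition~\ref{r-well-defined}: the differences $a_{ij}({\bm x})-a_{i'j}({\bm x})$ are indeed $\pm$(defining forms of $\mathcal{S}_n$), so both the positivity pattern in each column and the location of its minimum are region invariants; and $x_{p_j}>x_j$ rules out cycles.

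Step (c), however, contains a real gap, and your induction scheme differs from the paper's. You claim that once the signs in column $p_j$ are known, the constant-sign differences $a_{ij}({\bm x})-a_{p_jj}({\bm x})$ pin down $\sign\bigl(a_{ij}({\bm x})\bigr)$. But that difference equals $a_{i\,p_j}({\bm x})$ in some orderings of $(i,j,p_j)$ and $-a_{p_j\,i}({\bm x})$ in the others (this is exactly Lemma~\ref{r-sign-lemma} at $r=1$). The first lives in column $p_j$ and is covered by your hypothesis; the second lives in column $i$, not column $p_j$, and knowing $\sign\bigl(a_{i\,p_j}({\bm x})\bigr)$ does \emph{not} determine $\sign\bigl(a_{p_j\,i}({\bm x})\bigr)$ because $a_{i\,p_j}({\bm x})+a_{p_j\,i}({\bm x})=-1$. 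So your depth induction, as stated, does not close. It can be repaired by recursing again through column $i$ via $p_i$, but then you are chasing a chain $i\to p_i\to p_{p_i}\to\cdots$ inside the step, and the bookkeeping across the $i<j$ versus $i>j$ split is precisely the delicate part you flagged.

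The paper organizes injectivity differently: it inducts on $n$ rather than on depth. One picks a vertex $v_{j'}$ that is \emph{nobody's parent} (a leaf of the parent function, guaranteed by \eqref{claim}), projects via $\Proj_{j'}$ to a region of $\mathcal{S}_{n-1}$, and applies the induction hypothesis there (Proposition~\ref{r-injection}). The point of Lemma~\ref{r-uniqueness} is then that the deleted row and column of the sign matrix are recoverable from the smaller sign matrix together with $p_{j'}$ alone. This works cleanly because, with $j'$ removed, every remaining $p_j$ still refers to a surviving index, so the projected tree is a bona fide sub-instance; the residual row-$j'$ signs are then reconstructed by a short internal recursion on the column index (cases (C-1)--(C-4) in the proof of Lemma~\ref{r-uniqueness}), which terminates because $p_j\neq j'$ for all $j$. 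Your depth-first build-up lacks this clean sub-instance structure, which is why the missing signs leak across columns.
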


In 1998, a celebrated bijection $\mathcal{R}(\mathcal{S}_n^r)\to \mathcal{P}_n^r$ was obtained by Stanley \cite{Stanley3} and called {\bf the Pak-Stanley labeling}, which is defined recursively as follows. Start with the base region $\Delta_0\in \mathcal{R}(\mathcal{S}_n^r)$ with
\[
\Delta_0: x_1>x_2>\cdots>x_n>x_1-1,
\]
whose labeling is assumed to be $\lambda(\Delta_0)=(0,\ldots,0)\in \Bbb{Z}_{\ge 0}^n$. Suppose $\Delta\in \mathcal{R}(\mathcal{S}_n^r)$ has been labeled by $\lambda(\Delta)\in \Bbb{Z}_{\ge 0}^n$, and an unlabeled region $\Delta'\in \mathcal{R}(\mathcal{S}_n^r)$ is separated from $\Delta$ by a unique hyperplane $H:c_{ijk}({\bm x})=0$. Then define the region $\Delta'$ to be labeled by $\lambda(\Delta')=\lambda(\Delta)+e_i$. Using the cubic matrix $C_{\bm x}$, Theorem 2.1 of \cite{Stanley3} can be restated as follows.
\begin{theorem}{\rm \cite{Stanley3}}\label{Main-2}
Given a region $\Delta$ of $\mathcal{S}_n^r$ and ${\bm x}\in \Delta$, for any $i\in [n]$, let
\begin{equation*}\label{lambda}
\lambda_i(\Delta)={\rm ~the~number~of~positive~signs~of~} \sign\big(\row_i(C_{\bm x})\big).
\end{equation*}
The following map is a bijection
\begin{equation*}
\lambda: \mathcal{R}(\mathcal{S}_n^r)\to \mathcal{P}_n^r,\quad \Delta\mapsto \lambda(\Delta)=\big(\lambda_1(\Delta),\ldots,\lambda_n(\Delta)\big).
\end{equation*}
\end{theorem}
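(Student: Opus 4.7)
My plan is to show that the explicit formula $\lambda_i(\Delta)=\#\{(j,k):\sign_{ijk}(\Delta)=+\}$ coincides region by region with the Pak--Stanley labeling defined recursively just above the theorem. Since Stanley has already proved that the recursive labeling is a bijection $\mathcal{R}(\mathcal{S}_n^r)\to\mathcal{P}_n^r$, this coincidence will give the theorem at once. Accordingly I will verify that the formula satisfies the same initial value and the same one-step update as the Pak--Stanley recursion, and then close the argument by induction on the number of hyperplanes separating a region from the base region $\Delta_0$.

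For the base case I would check that $\lambda(\Delta_0)=(0,\dots,0)$. In $\Delta_0:x_1>x_2>\cdots>x_n>x_1-1$ one has $x_i-x_j\in(0,1)$ whenever $i<j$ and $x_i-x_j\in(-1,0)$ whenever $i>j$, so \eqref{cubic-matrix} forces $c_{ijk}(\bm x)<0$ for every off-diagonal pair $(i,j)$ and every $k\ge 1$. Hence $\sign_{ijk}(\Delta_0)=-$ throughout the cube, giving $\lambda_i(\Delta_0)=0$ for all $i\in[n]$.

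For the inductive step, the key observation is that the map $(i,j,k)\mapsto\{c_{ijk}(\bm x)=0\}$ is a bijection between triples with $i\ne j$, $k\in[r]$ and the hyperplanes of $\mathcal{S}_n^r$: triples with $i<j$ produce $x_i-x_j=k$ for $1\le k\le r$, while triples with $i>j$ produce $x_i-x_j=k-1$ for $1\le k\le r$, and together these realize every hyperplane of $\mathcal{S}_n^r$ exactly once. Consequently, if $\Delta$ and $\Delta'$ are separated by the unique hyperplane $H:c_{ijk}(\bm x)=0$, only the sign $\sign_{ijk}$ can differ between them while every other $\sign_{i'j'k'}$ is preserved. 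Orienting the crossing so that $\sign_{ijk}(\Delta)=-$ and $\sign_{ijk}(\Delta')=+$ therefore yields $\lambda_i(\Delta')=\lambda_i(\Delta)+1$ and $\lambda_{i'}(\Delta')=\lambda_{i'}(\Delta)$ for $i'\ne i$, matching the Pak--Stanley update $\lambda(\Delta')=\lambda(\Delta)+e_i$ exactly. The main subtlety, and the step I expect to require the most care, is confirming that this negative-to-positive orientation is the one forced by the Pak--Stanley convention; this follows because $\Delta_0$ sits on the negative side of every hyperplane, so any outward propagation of the recursion from $\Delta_0$ traverses each hyperplane in precisely the negative-to-positive direction. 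With the base case and one-step compatibility established, induction identifies the formula with the Pak--Stanley labeling, and bijectivity onto $\mathcal{P}_n^r$ is inherited from Stanley's theorem.
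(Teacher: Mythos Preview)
Your proposal is correct and follows essentially the same approach as the paper: both verify that $\Delta_0$ lies on the negative side of every hyperplane $c_{ijk}(\bm x)=0$, identify the positive entries in $\row_i(C_{\bm x})$ with the hyperplanes of the form $c_{ijk}=0$ separating $\Delta$ from $\Delta_0$, and then invoke Stanley's theorem that the recursive Pak--Stanley labeling is a bijection. The paper phrases this as a direct count of separating hyperplanes rather than an explicit induction over wall-crossings, but the content is the same.
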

\begin{proof}
Note that the base region is
\[
\Delta_0=\{{\bm y}\in \Bbb{R}^n\mid c_{ijk}({\bm y})<0,i,j\in [n],k\in [r]\}.
\]
If the region $\Delta$ is separated from $\Delta_0$ by the hyperplane $H:c_{ijk}({\bm y})=0$, then ${\bm x}\in \Delta$ implies $c_{ijk}({\bm x})>0$. From the definition of the Pak-Stanley labeling, it is easily seen that $\lambda_i(\Delta)$ is the number of the hyperplanes $H:c_{ijk}({\bm y})=0$ separating $\Delta$ from $\Delta_0$. Namely, $\lambda_i(\Delta)$ is the number of positive entries in the $i$-th row slice of $C_{\bm x}$.
\end{proof}

\begin{remark}
Theoretically, the compositions of our bijection $(\Psi_n^r)^{-1}:\mathcal{T}_n^r\to \mathcal{R}(\mathcal{S}_n^r)$ in Theorem {\rm \ref{Main-1}} and the Pak-Stanley labeling  $\lambda: \mathcal{R}(\mathcal{S}_n^r)\to \mathcal{P}_n^r$ in Theorem {\rm \ref{Main-2}}  will produce a bijection $\mathcal{T}_n^r\to \mathcal{P}_n^r$, while it seems to be highly complicated and difficult to be stated explicitly.
\end{remark}

\subsection{Bijection $\mathcal{R}(\mathcal{C}_n^r)\to \mathfrak{S}_n\times\mathcal{D}_n^r$}
In this section, we will establish a bijection $\mathcal{R}(\mathcal{C}_n^r)\to \mathfrak{S}_n\times\mathcal{D}_n^r$. Similar as \cite[p. 68]{Stanley1}, the permutation group $\mathfrak{S}_n$ acts on $\Bbb{R}^n$ by permuting coordinates, i.e., if $\pi\in \mathfrak{S}_n$, for ${\bm x}=(x_1,\ldots,x_n)\in \Bbb{R}^n$ we have
\[
\pi({\bm x})=(x_{\pi(1)},\ldots, x_{\pi(n)}).
\]
Given a region $\Delta\in \mathcal{R}(\mathcal{C}_n^r)$ and ${\bm x}\in \Delta$,  there is a unique permutation $\pi_{\sst  \Delta}\in \mathfrak{S}_n$, independent of the choice of ${\bm x}\in \Delta$, such that
\begin{equation*}\label{permutation}
x_{\pi_{\sst  \Delta}(1)}>\cdots>x_{\pi_{\sst  \Delta}(n)}.
\end{equation*}
Note that $\mathcal{R}(\mathcal{C}_n^r)$ is $\mathfrak{S}_n$-invariant, i.e., for any $\pi\in \mathfrak{S}_n$ and $\Delta\in \mathcal{R}(\mathcal{C}_n^r)$, we have
\[
\pi(\Delta)=\{\pi({\bm x})\mid {\bm x}\in \Delta\}\in \mathcal{R}(\mathcal{C}_n^r).
\]
For $\pi\in \mathfrak{S}_n$, denote by
\[
\mathcal{R}_\pi(\mathcal{C}_n^r)=\big\{\Delta\in \mathcal{R}(\mathcal{C}_n^r)\mid \pi_{\sst  \Delta}=\pi\big\}.
\]
In particular, let
\[
\mathcal{R}_{\bm 1}(\mathcal{C}_n^r)=\big\{\Delta\in \mathcal{R}(\mathcal{C}_n^r)\mid \pi_{\sst  \Delta}={\bm 1} {\rm~is~the~identity~permutation}\big\}.
\]
It is clear that $\pi$ is a bijection from $\mathcal{R}_{\pi}(\mathcal{C}_n^r)$ to $\mathcal{R}_{\bm 1}(\mathcal{C}_n^r)$ and
\[
\mathcal{R}(\mathcal{C}_n^r)=\Sqcup_{\pi\in \mathfrak{S}_n}\mathcal{R}_\pi(\mathcal{C}_n^r).
\]
To obtain the bijection $\mathcal{R}(\mathcal{C}_n^r)\to \mathfrak{S}_n\times\mathcal{D}_n^r$, it is enough to establish a bijection $\mathcal{R}_{\bm 1}(\mathcal{C}_n^r)\to \mathcal{D}_n^r$. Given a region $\Delta\in \mathcal{R}_{\bm 1}(\mathcal{C}_n^r)$ and a representative ${\bm x}=(x_1,x_2,\ldots,x_n)\in \Delta$, define the $\mathbf{cubic\; matrix}$ $D_{\bm x}= \big(d_{ijk}({\bm x})\big)\in \Bbb{R}^{n\times n \times r}$ to be
\begin{equation*}\label{catalan-matrix}
d_{ijk}({\bm x}) =
\begin{cases}
x_i-x_j-k,& \mbox{if}\; i\ne j;  \\
0 ,         & \mbox{if}\; i=j;
\end{cases}
\end{equation*}
Similar as before, each hyperplane $H\in \mathcal{C}_n^r$ is exactly defined by the equation $H: d_{ijk}({\bm x})=0$ for some $i,j\in [n]$ with $i\ne j$ and $k\in [r]$. So we still have that $\sign\big(d_{ijk}({\bm x})\big)$ is independent of the choice of representatives ${\bm x}\in \Delta$.

For any $r$-Dyck path $P\in \mathcal{D}_n^r$, if the vertical line $x=i-\frac{1}{2}$ intersects $P$ at the $y$-coordinate $h_i(P)$,  the sequence  ${\bm h}(P)=(h_1(P),\ldots, h_n(P))\in \Bbb{Z}^n$ is nondecreasing and satisfies $0\le h_i(P)\le r(i-1)$,  called the {\bf height sequence} of $P$. Conversely, it is clear that any nondecreasing sequence ${\bm h}=(h_1,\ldots, h_n)$ with $0\le h_i\le r(i-1)$ uniquely determines a $r$-Dyck path $P$ of length $n$ such that ${\bm h}(P)={\bm h}$. Indeed, the height sequence of a $r$-Dyck path is also a $r$-parking function. Now we are ready to give the bijection $\mathcal{R}_{\bm 1}(\mathcal{C}_n^r)\to \mathcal{D}_n^r$. Given any region $\Delta\in \mathcal{R}_{\bm 1}(\mathcal{C}_n^r)$ and ${\bm x}\in \Delta$, let
${\bm h}(\Delta)=\big(h_1(\Delta),\ldots,h_n(\Delta)\big)$  be a sequence defined by
\begin{equation}\label{h_j}
h_j(\Delta)={\rm ~the~number~of~positive~signs~of~} \sign\big(\col_j(D_{\bm x})\big), \quad j\in [n].
\end{equation}
As we shall see in Theorem \ref{Main-3}, the sequence ${\bm h}(\Delta)$ is exactly the height sequence of a $r$-Dyck path of length $n$, say $P_{\sst \Delta}$, which defines the bijection
\begin{equation}\label{R_1}
\mathcal{R}_{\bm 1}(\mathcal{C}_n^r)\to \mathcal{D}_n^r,\quad \Delta\mapsto P_{\sst \Delta}.
\end{equation}
Now for any region $\Delta\in \mathcal{R}(\mathcal{C}_n^r)$, we have $\Delta\in \mathcal{R}_{\pi_{\Delta}}(\mathcal{C}_n^r)$ and $\Delta'=\pi_{\sst \Delta}(\Delta)\in \mathcal{R}_{\bm 1}(\mathcal{C}_n^r)$.  By abuse of notations, denote by $P_{\sst \Delta}$ the corresponding $r$-Dyck path $P_{\sst \Delta'}$ obtained from the above bijection \eqref{R_1}, namely $P_{\sst \Delta}=P_{\pi_\Delta(\Delta)}$ for any $\Delta\in \mathcal{R}(\mathcal{C}_n^r)$. Below is our second main result.
\begin{theorem}\label{Main-3}
For any positive integers $r$ and $n$, the following map is a bijection,
\begin{equation*}\label{Phi}
\Phi_n^r: \mathcal{R}(\mathcal{C}_n^r)\rightarrow \mathfrak{S}_n\times \mathcal{D}_n^r, \quad \Phi_n^r(\Delta)=(\pi_{\sst \Delta}, P_{\sst\Delta}).
\end{equation*}
\end{theorem}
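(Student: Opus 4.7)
The plan is to first reduce the theorem to showing that the restricted map \eqref{R_1}, namely
\[
\mathcal{R}_{\bm 1}(\mathcal{C}_n^r)\to \mathcal{D}_n^r,\quad \Delta\mapsto P_{\sst\Delta},
\]
is a bijection. Indeed, since $\mathcal{R}(\mathcal{C}_n^r)=\Sqcup_{\pi\in \mathfrak{S}_n}\mathcal{R}_\pi(\mathcal{C}_n^r)$ and each $\pi$ induces a bijection $\mathcal{R}_\pi(\mathcal{C}_n^r)\to \mathcal{R}_{\bm 1}(\mathcal{C}_n^r)$, the map $\Phi_n^r$ is bijective if and only if the restricted map is.

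Next I would verify that $\bm h(\Delta)$ is a valid $r$-Dyck height sequence. Picking ${\bm x}\in \Delta\in \mathcal{R}_{\bm 1}(\mathcal{C}_n^r)$ with $x_1>\cdots>x_n$, the entries $d_{ijk}({\bm x})$ are strictly negative whenever $i>j$, so only rows with $i<j$ contribute to $\col_j(D_{\bm x})$, giving
\[
h_j(\Delta)=\sum_{i<j}\#\{k\in [r]\mid x_i-x_j>k\};
\]
this immediately yields $h_1(\Delta)=0$ and $h_j(\Delta)\le r(j-1)$. Monotonicity follows from
\[
h_{j+1}(\Delta)-h_j(\Delta)=\#\{k\mid k<x_j-x_{j+1}\}+\sum_{i<j}\big(\#\{k\mid k<x_i-x_{j+1}\}-\#\{k\mid k<x_i-x_j\}\big),
\]
with each summand nonnegative since $x_j>x_{j+1}$ forces $x_i-x_{j+1}>x_i-x_j$ for every $i<j$.

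I would then establish bijectivity of the restricted map by induction on $n$, the base $n=1$ being trivial. For the inductive step, consider the projection $\mathcal{R}_{\bm 1}(\mathcal{C}_n^r)\to \mathcal{R}_{\bm 1}(\mathcal{C}_{n-1}^r)$ that forgets $x_n$, and fix a fiber over $\Delta'$ with representative ${\bm x}'=(x_1',\ldots,x_{n-1}')$ and heights $(h_1,\ldots,h_{n-1})$. A region in this fiber is specified by a choice of $x_n\in(-\infty,x_{n-1}')$ avoiding the $(n-1)r$ thresholds $\{x_i'-k\mid i\in [n-1],\,k\in [r]\}$; these thresholds are pairwise distinct (coincidence would land ${\bm x}'$ on a hyperplane of $\mathcal{C}_{n-1}^r$), and by the formula above exactly $h_{n-1}$ of them exceed $x_{n-1}'$. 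The remaining $(n-1)r-h_{n-1}$ thresholds partition $(-\infty,x_{n-1}')$ into $(n-1)r-h_{n-1}+1$ open intervals; on the $t$-th topmost interval one reads $h_n(\Delta)=h_{n-1}+t-1$, so $h_n$ ranges bijectively over $\{h_{n-1},h_{n-1}+1,\ldots,r(n-1)\}$, which is exactly the set of admissible ways to extend $P_{\Delta'}$ to an $r$-Dyck path in $\mathcal{D}_n^r$. Combined with the inductive hypothesis, this closes the induction. The main obstacle will be this fiber analysis: correctly counting which thresholds lie above versus below $x_{n-1}'$ and linking the count to $h_{n-1}(\Delta')$, and then verifying that $h_n(\Delta)$ increments by exactly one as $x_n$ crosses each threshold from above. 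Once this discrete matching is pinned down, the rest is bookkeeping.
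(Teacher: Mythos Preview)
Your argument is correct, and it takes a genuinely different route from the paper's proof. The paper does not argue by induction; instead it invokes the known enumerations $|\mathcal{R}(\mathcal{C}_n^r)|=n!\,C(n,r)$ (Theorem~\ref{Catalan}) and $|\mathcal{D}_n^r|=C(n,r)$ (Theorem~\ref{Dyck}) to reduce to showing that \eqref{R_1} is \emph{injective}, and then proves injectivity by contradiction. Assuming two distinct regions $\Delta,\Omega\in\mathcal{R}_{\bm 1}(\mathcal{C}_n^r)$ share the same height sequence, the paper picks the minimal column index $j$ at which a separating hyperplane $d_{ijk}=0$ occurs, finds a compensating pair $(i',k')$ forced by $h_j(\Delta)=h_j(\Omega)$, and then shows that the hyperplane $d_{ii'(k-k')}=0$ (or its symmetric variant) also separates $\Delta$ from $\Omega$, contradicting the minimality of $j$.

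Your inductive fiber analysis trades this minimal-index contradiction for a direct structural description: fixing $\Delta'$ and a representative ${\bm x}'$, the thresholds $x_i'-k$ are linearly ordered (and this ordering depends only on $\Delta'$, since any coincidence or comparison reduces to the sign of some $d_{ii'm}({\bm x}')$ with $|m|\le r-1$), so the regions in the fiber are in bijection with the intervals of $(-\infty,x_{n-1}')$ they cut out, and $h_n$ labels them bijectively by $\{h_{n-1},\dots,r(n-1)\}$. The upshot is that your proof is self-contained---it does not appeal to Theorems~\ref{Catalan} or~\ref{Dyck} and in fact re-derives the count $|\mathcal{R}_{\bm 1}(\mathcal{C}_n^r)|=C(n,r)$ as a byproduct---and it makes the inverse map explicit. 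The paper's approach is shorter once the enumerations are in hand and stays closer to the cubic-matrix formalism it is promoting. One small point worth stating explicitly in your write-up is that \emph{every} region in the fiber is hit by some interval (not just that distinct intervals give distinct regions); this follows because a region in the fiber is determined by $\Delta'$ together with the signs of $d_{ink}({\bm x})$, and any realizable such sign pattern is realized over the fixed ${\bm x}'$ since the threshold ordering is constant on $\Delta'$.
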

\begin{proof}
Notice from Theorem \ref{Catalan} and \ref{Dyck} that the both $\mathcal{R}(\mathcal{C}_n^r)$ and $\mathfrak{S}_n\times \mathcal{D}_n^r$ have the same cardinality $n!C(n,r)$.  By the above arguments, it is enough to show that the map defined in \eqref{R_1} is injective. For any $\Delta\in\mathcal{R}_{\bm 1}(\mathcal{C}_n^r)$ and
$\bm x=(x_1,\ldots,x_n)\in\Delta$, we have $x_1>x_2>\cdots>x_n$. It is easily seen from the definition of the cubic matrix $D_{\bm x}= \big(d_{ijk}({\bm x})\big)\in \Bbb{R}^{n\times n \times r}$ that
\begin{itemize}
  \item[(a)] $d_{ijk}(\bm x)>0$ implies $i<j$;
  \item[(b)] if $i<j<j'$, then $d_{ijk}(\bm x)>0$ implies $d_{ij'k}(\bm x)>0$ since $d_{ij'k}(\bm x)>d_{ijk}(\bm x)$.
\end{itemize}
Note from the definition of \eqref{h_j} that for $j\in[n]$,
\begin{equation}\label{h}
h_j({\Delta})=\#\big\{(i,k)\in[n]\times[r]\mid d_{ijk}(\bm x)>0\big\}.
\end{equation}
The properties (a) and (b) imply $h_j(\Delta)\le r(j-1)$ for any $j\in [n]$ and $h_1(\Delta)\le h_2(\Delta)\le\cdots\le h_n(\Delta)$ respectively. So ${\bm h}(\Delta)=\big(h_1(\Delta),\ldots,h_n(\Delta)\big)$  is a height sequence of a $r$-Dyck path of length $n$, i.e., the map given in \eqref{R_1} is well-defined. Next we prove the injectivity of the map in \eqref{R_1} by contradiction. Suppose $\Delta$ and $\Omega$ are two distinct regions in  $\mathcal{R}_{\bm 1}(\mathcal{C}_n^r)$ with ${\bm h}(\Delta)={\bm h}(\Omega)$ and let ${\bm x\in \Delta}$ and ${\bm y}\in \Omega$.  From $\Delta\ne \Omega$, we have a minimal index $j\in[n]$ such that the hyperplane $H: d_{ijk}({\bm z})=0$ separates $\Delta$ from $\Omega$. Assume
\[
d_{ijk}({\bm x})=x_i-x_j-k>0 \quad \And\quad  d_{ijk}({\bm y})=y_i-y_j-k<0.
\]
Since $h_j(\Delta)=h_j(\Omega)$, from \eqref{h} there must exist a pairing $(i',k')\ne (i,k)$ such that
\[
d_{i'jk'}({\bm x})=x_{i'}-x_j-k'<0 \quad\And\quad d_{i'jk'}({\bm y})=y_{i'}-y_j-k'>0.
\]
By property (a), we have $i,i'<j$. If $i=i'$, we have $k'>x_i-x_j>k$ since $d_{ijk}({\bm x})>0$ and $ d_{i'jk'}({\bm x})<0$  and $k>y_i-y_j>k'$ since $d_{ijk}({\bm y})<0$ and $ d_{i'jk'}({\bm y})>0$, which is a contradiction. If $i<i'$, we have $k>y_i-y_j>y_{i'}-y_j>k'$ since $d_{ijk}({\bm y})<0$ and $ d_{i'jk'}({\bm y})>0$. Consider the hyperplane $H: d_{ii'(k-k')}({\bm z})=0$. We have
\begin{eqnarray*}
d_{ii'(k-k')}({\bm x})&=&d_{ijk}({\bm x})-d_{i'jk'}({\bm x})>0,\\
d_{ii'(k-k')}({\bm y})&=&d_{ijk}({\bm y})-d_{i'jk'}({\bm y})<0,
\end{eqnarray*}
which means that the hyperplane $H: d_{ii'(k-k')}({\bm z})=0$ separates $\Delta$ from $\Omega$, a contradiction to the minimality of the index $j$. By similar arguments as the case $i<i'$, we can obtain a contradiction for the case $i>i'$. So we can conclude that the map in \eqref{R_1} is injective, which completes the proof.
\end{proof}
It is easily seen that Theorem \ref{Main-3} not only extends the bijection $ \mathcal{R}(\mathcal{C}_n)\to \mathfrak{S}_n\times\mathcal{D}_n$ defined in \cite[page 69]{Stanley1}, but make it more straightforward with the help of the cubic matrix. Below is an example to illustrate the construction of the Dyck path from a region in the case of $r=1$.
\begin{example}\label{Dyck-path}
Let $\Delta\in \mathcal{R}(\mathcal{C}_6)$ be the region
\begin{eqnarray*}
\Delta=\left\{{\bm x}=(x_1,\ldots,x_6)\in \Bbb{R}^6\left|
\begin{array}{l}
x_4>x_3>x_6>x_1>x_2>x_5,\\
x_3-x_1>1,\;x_1-x_2>1,\\
x_4-x_6<1,\;x_6-x_1<1,\;x_2-x_5<1.
\end{array}
\right.\right\}.
\end{eqnarray*}
It is obvious that $\pi_{\sst \Delta}=436125\in \mathfrak{S}_6$ and  for $1\le i<j\le 6$,
\begin{eqnarray*}
\Delta'=\pi_{\sst \Delta}(\Delta)=\left\{{\bm x}=(x_1,\ldots,x_6)\in \Bbb{R}^6\left|
\begin{array}{l}
x_1>x_2>x_3>x_4>x_5>x_6,\\
x_2-x_4>1,\;x_4-x_5>1,\\
x_1-x_3<1,\;x_3-x_4<1,\;x_5-x_6<1.
\end{array}
\right.\right\}.
\end{eqnarray*}
It follows that for any ${\bm x}\in \Delta'$,
\begin{eqnarray*}
\sign(c_{ij1}({\bm x}))=\left\{\begin{array}{ll}
-,&{\rm if}\; (i,j)\in {\big\{}(1,2), (1,3),(2,3),(3,4),(5,6){\big\}};\vspace{.2cm}\\
+,& {\rm if}\; (i,j)\in {\big\{}(1,4), (2,4), (1,5),(2,5),(3,5),(4,5),(1,6),(2,6),(3,6),(4,6){\big \}}.
\end{array}
\right.
\end{eqnarray*}
So we have ${\bm h}(\Delta')=(0,0,0,2,4,4)$, which is the height sequence of the Dyck path $P_{\sst\Delta}=P_{\sst\Delta'}$. Namely, $\Phi_n^1(\Delta)=(\pi_\Delta, P_{\sst\Delta})$ with $\pi_\Delta=436125$ and $P_{\sst\Delta}=$ the red path of {\rm Figure\,-\ref{figure2}}.
\begin{figure}[ht]
\centering
\includegraphics[width=5cm,height=5cm]{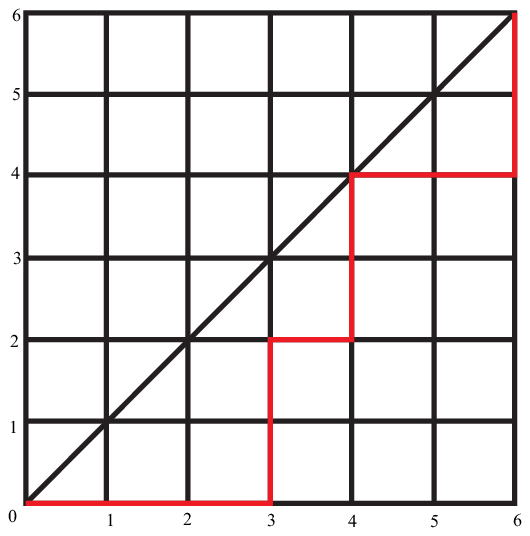}
\hspace{2cm}\caption{The Dyck path $D_{\sst \Delta}$}
\label{figure2}
\end{figure}
\end{example}

\section{$O$-Rooted Labeled $r$-Trees}
Preparing for Theorem \ref{Main-1}, we give some characterizations on $O$-rooted labeled $r$-trees in this section. For the structural integrity, below we restate the definition of $O$-rooted labeled $r$-trees following from Foata \cite{Foata} in 1971.
\begin{definition}\label{r-tree}{\rm \cite{Foata}}
Let $O=\{o_1,\ldots, o_r\}$ and $V=\{v_1,\ldots, v_n\}$ be two disjoint sets of labeled vertices.  An {\bf $O$-rooted labeled $r$-tree} $T$ on $O\cup V$  is a graph having the property: there is a valid rearrangement $\nu=(v_{i_1},\ldots, v_{i_n})$ of vertices $v_1,\ldots, v_n$,  such that each $v_{i_j}$ with $j\in [n]$ is adjacent to exactly $r$ vertices in $\{o_1,\ldots, o_r, v_{i_1},\ldots, v_{i_{j-1}}\}$ and, moreover, these $r$ vertices are themselves mutually adjacent in $T$. Let
\[
F_{\T}^\nu(v_{i_j})=\{v\mid v {\rm ~is~adjacent ~to~} v_{i_j} {\rm ~in~} T\} \cap \{o_1,\ldots, o_r, v_{i_1},\ldots, v_{i_{j-1}}\},
\]
whose members are called {\bf fathers of $v_{i_j}$ in $T$ under $\nu$}.
\end{definition}
\begin{remark}\label{remark}
{\rm Note from the above definition that the father set of $v_{i_1}$ in $T$ under $\nu$ is the root set $O$, so vertices of $O$ are mutually adjacent in $T$. In the case of $r=1$, for any ordinary tree $T$ on the labeled vertices $O\cup V$,  suppose that $\nu$ is a rearrangement having the property: $v_i$ is ahead of $v_j$ in $\nu$ whenever $d_\T(v_i,o_1)<d_\T(v_j,o_1)$ as distances of two vertices in $T$. Obviously such $\nu$ always exists and is valid for defining $T$ as an $O$-rooted labeled tree.}
\end{remark}
To make the above definition more clear,  Propositions \ref{proposition-1}-\ref{proposition-3} are characterizations on valid rearrangements and father sets, which might have been obtained by others in the literature but not noticed by us yet. Indeed, the $r$-trees have been characterized exactly to be the maximal graphs with a given treewidth in \cite{NOP2008}, and  the chordal graphs all of whose maximal cliques are the same size $r+1$ and all of whose minimal clique separators are also all the same size $r$ in \cite{Patil1986}.

\begin{proposition}\label{proposition-1}
Let $O=\{o_1,\ldots, o_r\}$ and $V=\{v_1,\ldots, v_n\}$ be two disjoint sets of labeled vertices, and $T$ an $O$-rooted labeled $r$-tree on $O\cup V$. For each $i\in [n]$, the father set $F_\T^\nu(v_i)$ is independent of the choice of valid rearrangements $\nu$ for $T$, and denoted by $F_\T(v_i)$.
\end{proposition}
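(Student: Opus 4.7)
The plan is to prove the proposition by strong induction on $n = |V|$, using swap moves on valid rearrangements.

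Two structural observations come first. Observation (A): for any valid rearrangement $\nu = (v_{i_1}, \ldots, v_{i_n})$, the last vertex $v_{i_n}$ has all of its $T$-neighbors among its predecessors (there being no successors), so $\deg_{\sst T}(v_{i_n}) = r$ and $N_\T(v_{i_n}) = F_\T^\nu(v_{i_n})$ is an $r$-clique; that is, $v_{i_n}$ is simplicial of degree $r$. Observation (B): deleting the last vertex yields a smaller instance of the same kind. Precisely, $T - v_{i_n}$ on $O \cup (V \setminus \{v_{i_n}\})$ is again an $O$-rooted labeled $r$-tree with valid rearrangement $(v_{i_1}, \ldots, v_{i_{n-1}})$, and the father set of every remaining vertex under this restricted rearrangement equals its father set in $T$ under $\nu$, since the defining $r$-clique of predecessor-neighbors is identical.

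The main tool would be a Swap Invariance Lemma. If $v_{i_j} \notin F_\T^\nu(v_{i_{j+1}})$, then the rearrangement $\nu'$ obtained by swapping positions $j$ and $j+1$ is also valid, and $F_\T^{\nu'}(v) = F_\T^\nu(v)$ for every $v \in V$. Validity after the swap is immediate: the $r$-clique $F_\T^\nu(v_{i_{j+1}})$ is already contained in $\{o_1, \ldots, o_r, v_{i_1}, \ldots, v_{i_{j-1}}\}$ by the hypothesis, while $F_\T^\nu(v_{i_j})$ remains an $r$-clique inside the enlarged predecessor set $\{o_1, \ldots, o_r, v_{i_1}, \ldots, v_{i_{j-1}}, v_{i_{j+1}}\}$. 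Because the predecessor set at every other position is unchanged as a set, the father set at every other position is unchanged as well. A direct consequence (call it Lemma $\star$) is that any simplicial vertex $u \in V$ of degree $r$ can be moved to the last position of any valid rearrangement via such swaps: by (A) applied to $u$, all of $u$'s neighbors are already predecessors of $u$, hence no successor of $u$ is adjacent to $u$, allowing repeated swaps that carry $u$ past each successor without disturbing father sets.

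The induction is then straightforward. Given two valid rearrangements $\nu$ and $\mu$, let $u = v_{i_n}$ be the last vertex of $\nu$, which is simplicial of degree $r$ by (A). Apply Lemma $\star$ to $\mu$ to obtain a valid rearrangement $\mu^*$ of $T$ with $u$ last and with $F_\T^{\mu^*} = F_\T^\mu$ pointwise. Now both $\nu$ and $\mu^*$ end with $u$; by (B) the restrictions to $T - u$ are valid rearrangements of a strictly smaller $O$-rooted labeled $r$-tree, and the induction hypothesis delivers $F_\T^\nu(v) = F_\T^{\mu^*}(v)$ for every $v \in V \setminus \{u\}$. For the vertex $u$ itself, $F_\T^\nu(u) = N_\T(u) = F_\T^{\mu^*}(u)$ by (A). Combining gives $F_\T^\nu = F_\T^\mu$, closing the induction.

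The delicate step is the Swap Invariance Lemma, specifically verifying that the father sets of the $n-2$ vertices \emph{other} than $v_{i_j}$ and $v_{i_{j+1}}$ remain unchanged. This reduces to the observation that for each position $k \ne j, j+1$, the set (as opposed to the ordered tuple) of predecessors is identical in $\nu$ and $\nu'$, so the defining $r$-clique of fathers cannot change. Once this and the equally routine check that $u$'s neighborhood is a clique in (A) are in place, the rest is bookkeeping on what it means for a rearrangement to be valid.
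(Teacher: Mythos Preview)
Your proof is correct but follows a genuinely different route from the paper's. The paper argues directly by minimal counterexample: fixing one valid rearrangement $\epsilon=(v_1,\ldots,v_n)$ and another valid $\nu$, it takes the smallest index $s$ with $F_\T^\epsilon(v_s)\ne F_\T^\nu(v_s)$, picks an element of the difference, and derives a contradiction in two or three lines from the fact that adjacency forces one of any adjacent pair to lie in the other's father set under $\nu$. No induction, no swaps, no simplicial vertices.

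Your argument is more structural: you exploit that the last vertex in any valid rearrangement is simplicial of degree $r$, show that adjacent transpositions across non-adjacent pairs preserve validity and all father sets, bubble a simplicial vertex to the end, delete it, and recurse. The paper's approach is considerably shorter and needs no auxiliary lemmas. Yours, on the other hand, is constructive---it exhibits an explicit chain of valid rearrangements connecting any two---and the Swap Invariance Lemma and the degree-$r$ simplicial observation are useful structural facts about $r$-trees in their own right (indeed they essentially reprove the ``leaf-deletion'' recursion underlying Definition~\ref{r-tree}). One small expository point: when you invoke ``(A) applied to $u$'' inside Lemma~$\star$, what you actually use is that $\deg_\T(u)=r$ (from (A) in $\nu$) together with the validity of $\mu$ forcing all $r$ neighbors of $u$ to be $\mu$-predecessors; it would read more cleanly to separate those two facts.
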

\begin{proof}
Without loss of generality, we may assume that $\epsilon=(v_1,\ldots, v_n)$ is a valid rearrangement for $T$. Given a new valid rearrangement $\nu$ of vertices $v_1,\ldots, v_n$ for $T$, suppose $s$ is the minimal number such that $F_{\T}^{\epsilon}(v_s)\ne F_{\T}^{\nu}(v_s)$. If $o_j\in F_{\T}^{\epsilon}(v_s)\setminus F_{\T}^{\nu}(v_s)$ for some $j\in [r]$, then $o_j\notin F_{\T}^{\nu}(v_s)$ implies that $v_s$ is not adjacent to $o_j$ in $T$, a contradiction to $o_j\in F_{\T}^{\epsilon}(v_s)$. If $v_t\in F_{\T}^{\epsilon}(v_s)\setminus F_{\T}^{\nu}(v_s)$, then we have $t<s$ since $v_t\in F_{\T}^{\epsilon}(v_s)$ and $v_s\notin F_{\T}^{\epsilon}(v_t)=F_{\T}^{\nu}(v_t)$ by the minimality of $s$. Note the fact that $v_s$ is adjacent to $v_t$ in $T$, a contradiction to $v_s\notin F_{\T}^{\nu}(v_t)$ and $v_t\notin F_{\T}^{\nu}(v_s)$. Hence, $F_{\T}^{\epsilon}(v_i)= F_{\T}^{\nu}(v_i)$ for all $i\in [n]$.
\end{proof}

\begin{proposition}\label{proposition-2}
Let $O=\{o_1,\ldots, o_r\}$ and $V=\{v_1,\ldots, v_n\}$ be two disjoint sets of labeled vertices, and $T$ an $O$-rooted labeled $r$-tree on $O\cup V$. A rearrangement $\nu$ of vertices $v_1,\ldots, v_n$ is valid for $T$ if and only if  $v_s$ is ahead of $v_t$ in $\nu$ whenever $v_s\in F_\T(v_t)$.
\end{proposition}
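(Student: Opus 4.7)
The plan is to prove the two directions of the equivalence separately, relying crucially on Proposition~\ref{proposition-1}, which asserts that $F_\T(v_i)$ is independent of the choice of valid rearrangement.

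The forward direction should be immediate. Suppose $\nu=(v_{i_1},\ldots,v_{i_n})$ is valid for $T$ and $v_s\in F_\T(v_t)$. Writing $v_t=v_{i_j}$, Proposition~\ref{proposition-1} gives $F_\T(v_t)=F_\T^\nu(v_t)$, and by Definition~\ref{r-tree} every element of $F_\T^\nu(v_{i_j})$ lies in $\{o_1,\ldots,o_r,v_{i_1},\ldots,v_{i_{j-1}}\}$. Hence $v_s$ appears strictly before $v_t$ in $\nu$.

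For the reverse direction, I would fix an arbitrary rearrangement $\nu=(v_{i_1},\ldots,v_{i_n})$ satisfying the ordering condition, together with some valid rearrangement $\epsilon$ guaranteed by Definition~\ref{r-tree}. At each position $j$, two things must be verified: $v_{i_j}$ is adjacent in $T$ to exactly $r$ vertices of $\{o_1,\ldots,o_r,v_{i_1},\ldots,v_{i_{j-1}}\}$, and those $r$ vertices form an $(r+1)$-clique with $v_{i_j}$. The ordering hypothesis at once forces $F_\T(v_{i_j})\subseteq\{o_1,\ldots,o_r,v_{i_1},\ldots,v_{i_{j-1}}\}$, and since $|F_\T(v_{i_j})|=r$ with those vertices already mutually adjacent (inherited from the validity of $\epsilon$ via Proposition~\ref{proposition-1}), the clique condition comes for free. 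The real work is to rule out any \emph{additional} adjacency of $v_{i_j}$ inside the before-set.

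The main obstacle, and the central observation I expect to need, is the following: every edge $\{u,w\}$ of $T$ with $u,w\in O\cup V$ satisfies $u\in F_\T(w)$ or $w\in F_\T(u)$, since whichever vertex appears first under $\epsilon$ belongs to the other's $\epsilon$-father set and therefore to its $F_\T$ (roots always come first in $\epsilon$). With this in hand, suppose some $u\in\{o_1,\ldots,o_r,v_{i_1},\ldots,v_{i_{j-1}}\}$ is adjacent to $v_{i_j}$ yet $u\notin F_\T(v_{i_j})$. If $u=o_l$, then $o_l$ precedes $v_{i_j}$ in $\epsilon$, forcing $o_l\in F_\T(v_{i_j})$, a contradiction. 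If $u=v_{i_m}$ with $m<j$, the observation says either $v_{i_m}\in F_\T(v_{i_j})$ (immediate contradiction) or $v_{i_j}\in F_\T(v_{i_m})$, in which case the hypothesis on $\nu$ would place $v_{i_j}$ ahead of $v_{i_m}$, contradicting $m<j$. Therefore $v_{i_j}$ has exactly the $r$ adjacencies $F_\T(v_{i_j})$ within the before-set, so $\nu$ is valid.
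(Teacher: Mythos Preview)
Your proof is correct and follows essentially the same approach as the paper's: both directions hinge on Proposition~\ref{proposition-1}, and for the harder implication you and the paper alike show that the set of neighbors of $v_{i_j}$ lying ahead of it in $\nu$ coincides with $F_\T(v_{i_j})$, the key step being that adjacency of two $V$-vertices forces one into the other's father set under $\epsilon$. The paper packages this comparison via an auxiliary set $G_\T^\nu(v_i)$ and leaves the edge observation implicit, whereas you state it explicitly and treat roots separately, but the logic is the same.
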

\begin{proof}
The sufficiency is obvious from the definition of $O$-rooted labeled $r$-tree. To prove the necessity, we may assume that $\epsilon=(v_1,\ldots, v_n)$ is a valid rearrangement for $T$. Proposition \ref{proposition-1} implies $F_{\T}^{\epsilon}(v_i)=F_{\T}(v_i)$. Now suppose that $\nu$ is a rearrangement such that $v_s$ is ahead of $v_t$ in $\nu$ whenever
$v_s\in F_{\T}^{\epsilon}(v_t)$. Let $G_{\T}^{\nu}(v_i)$ consist of those vertices $o_1,\ldots o_r$ who are adjacent to  $v_i$ in $T$, and vertices $v_1,\ldots v_n$ who are adjacent to $v_i$ in $T$ and ahead of $v_i$ in $\nu$. Immediately, we have $F_{\T}^{\epsilon}(v_i)\subseteq G_{\T}^{\nu}(v_i)$ for all $i\in [n]$ and $G_{\T}^{\nu}(v_i)\cap O=F_{\T}^{\epsilon}(v_i)\cap O$. To obtain the necessity, i.e., $\nu$ is valid for $T$, it is enough to show $F_{\T}^{\epsilon}(v_i)=G_{\T}^{\nu}(v_i)$. Suppose $G_{\T}^{\nu}(v_t)\ne F_{\T}^{\epsilon}(v_t)$ and $v_s\in G_{\T}^{\nu}(v_t)\setminus F_{\T}^{\epsilon}(v_t)$ for some $s,t\in [n]$. By the definition of $G_{\T}^{\nu}(v_t)$, $v_s\in G_{\T}^{\nu}(v_t)$ implies that $v_s$ is ahead of $v_t$ in $\nu$ and adjacent to $v_t$ in $T$. From the assumption of $\nu$, we have $v_t\notin F_{\T}^{\epsilon}(v_s)$. Note that $v_s$ and $v_t$ are adjacent, a contradiction to $v_t\notin F_{\T}^{\epsilon}(v_s)$ and $v_s\notin F_{\T}^{\epsilon}(v_t)$.
\end{proof}

\begin{proposition}\label{proposition-3}
Let $O=\{o_1,\ldots,o_r\}$ and $V=\{v_1,\ldots,v_n\}$ be two disjoint sets of labeled vertices, and $F: V\to {O\cup V\choose r}$. There is an $O$-rooted labeled $r$-tree $T$ on $O\cup V$ with $F(v_i)=F_\T(v_i)$ for all $i\in [n]$  if and only if $F$ satisfies the following properties:
\begin{itemize}
\item[{\rm (a)}] if $v_{i_1}\in F(v_{i_2}), \ldots, v_{i_{j-1}}\in F(v_{i_j})$ for some $i_1,\ldots, i_j\in [n]$, then $v_{i_j}\notin F(v_{i_1})$;
\item[{\rm (b)}] if $F(v_j)\ne O$,  then there is a vertex $v_{i}\in  F(v_j)$ such that $|F(v_j)\cap F(v_i)|=r-1$.
\end{itemize}
Moreover, both the $r$-tree $T$ and the vertex $v_i$ in {\rm (b)} are unique.
\end{proposition}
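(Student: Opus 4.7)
The plan is to leverage Propositions \ref{proposition-1} and \ref{proposition-2}, which say that valid rearrangements for an $O$-rooted labeled $r$-tree $T$ coincide with topological orders of the directed graph on $V$ whose arcs are $v \to w$ whenever $v \in F_\T(w) \cap V$. Both directions of the proposition then reduce to arguments about such orders combined with an inductive clique analysis.

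For the necessity direction, suppose $T$ realizes $F$. Condition (a) is a direct consequence of Proposition \ref{proposition-2}: a chain $v_{i_1} \in F(v_{i_2}), \ldots, v_{i_{j-1}} \in F(v_{i_j})$ forces $v_{i_1},\ldots,v_{i_j}$ to appear in this strict order in any valid $\nu$, which by Proposition \ref{proposition-2} again rules out $v_{i_j} \in F(v_{i_1})$. For (b), I fix a valid $\nu$ and, given $v_j$ with $F(v_j) \ne O$ (so $F(v_j) \cap V \ne \emptyset$ since $|F(v_j)| = r = |O|$), let $v_p$ be the latest element of $F(v_j) \cap V$ in $\nu$. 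Every other element of $F(v_j)$ either lies in $O$ or precedes $v_p$ in $\nu$, and is adjacent to $v_p$ because $F(v_j)$ is an $r$-clique in $T$; hence it lies in $F_\T^\nu(v_p) = F(v_p)$. Thus $F(v_j) \setminus \{v_p\} \subseteq F(v_p)$, and since $v_p \notin F(v_p)$, we obtain $|F(v_j) \cap F(v_p)| = r-1$.

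For sufficiency, assume $F$ satisfies (a) and (b). Condition (a) makes the digraph above acyclic, so pick a topological order $\nu = (v_{i_1}, \ldots, v_{i_n})$; then $F(v_{i_j}) \subseteq \{o_1,\ldots,o_r, v_{i_1},\ldots,v_{i_{j-1}}\}$, and the case $j = 1$ forces $F(v_{i_1}) = O$ by cardinality. I then construct $T$ on $O \cup V$ by making $O$ an $r$-clique and adding the edges $v_j u$ for all $j \in [n]$ and $u \in F(v_j)$, and verify by induction on $j$ that $F(v_{i_j})$ is an $r$-clique in $T$: the case $F(v_{i_j}) = O$ is built in, and otherwise (b) supplies $v_p \in F(v_{i_j}) \cap V$ with $|F(v_{i_j}) \cap F(v_p)| = r-1$. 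Since $v_p \notin F(v_p)$ by the antireflexive instance of (a), we have $F(v_{i_j}) \setminus \{v_p\} \subseteq F(v_p)$, and the inductive hypothesis makes $\{v_p\} \cup F(v_p)$ an $(r+1)$-clique containing $F(v_{i_j})$. The neighbor count of $v_{i_j}$ among the earlier vertices is exactly $r$, since the topological order prevents any earlier $v_{i_k}$ from having $v_{i_j}$ as a father.

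Uniqueness of $T$ is immediate because its edge set is completely forced by $F$: the root $r$-clique on $O$ together with the edges $v_j u$ for $u \in F(v_j)$. For uniqueness of $v_p$ in (b), if distinct $v_p, v_{p'} \in F(v_j)$ both satisfied the condition, the inclusion argument above would give $v_{p'} \in F(v_p)$ and $v_p \in F(v_{p'})$, contradicting (a). I expect the main obstacle to be the inductive clique step in the sufficiency direction: one must pivot on the correct $v_p$ so that the newly added vertex $v_{i_j}$ inherits its clique structure from a previously constructed $(r+1)$-clique, and condition (b) is precisely the hypothesis that licenses this pivot.
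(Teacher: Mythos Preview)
Your proof is correct and follows essentially the same approach as the paper's. Both directions hinge on the same ideas: Proposition~\ref{proposition-2} to reinterpret condition~(a) as acyclicity of the parent digraph, and for~(b) the observation that the \emph{latest} member $v_p$ of $F(v_j)\cap V$ in a valid order satisfies $F(v_j)\setminus\{v_p\}\subseteq F(v_p)$. The only structural difference is in the sufficiency induction: the paper inducts on $|V|$ by deleting a vertex $v_{i_k}$ that lies in no $F(v_i)$ (a sink of the parent digraph) and rebuilding $T$ from the smaller tree $T'$, whereas you fix a full topological order upfront and induct on the position $j$ to show each $F(v_{i_j})$ is a clique in the already-defined graph $T$. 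These are dual formulations of the same argument, and your version is arguably a bit cleaner since it avoids re-verifying that the restricted $F'$ still satisfies~(a) and~(b). Your uniqueness arguments for $T$ and for the vertex in~(b) match the paper's exactly.
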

\begin{proof}Let's prove the second part first. If $T$ is an $O$-rooted labeled $r$-tree on $O\cup V$ with $F(v_i)=F_\T(v_i)$, then  all vertices of $F(v_i)\cup \{v_i\}$ are mutually adjacent, which  exactly form all edges of $T$. So $T$ is uniquely determined by $F(v_i)=F_\T(v_i)$.  To prove the uniqueness of $v_i$ in (b), note that (a) implies $v_j\notin F(v_j)$ for all $j\in [n]$. Suppose there is another vertex $v_{i'}\in F(v_j)$ with $i'\ne i$ such that $|F(v_j)\cap F(v_{i'})|=r-1$. Then we have $v_i\in F(v_{i'})$ and $v_{i'}\in F(v_i)$, a contradiction to (a).

To prove the sufficiency of the first part, we may assume that $\epsilon=(v_1,\ldots, v_n)$ is a valid rearrangement for $T$.   From Proposition \ref{proposition-2},  if $v_j\in F(v_i)$, $v_j$ is ahead of $v_i$ in $\epsilon$, i.e., $j<i$. So if $v_{i_1}\in F(v_{i_2}), \ldots, v_{i_{j-1}}\in F(v_{i_j})$ for some $i_1,\ldots, i_j\in [n]$, then $i_1<i_j$ which implies $v_{i_j}\notin F(v_{i_1})$ and (a) holds. To prove (b), let $i$ be the largest number with $v_i\in F(v_j)$, $i<j$ obviously.   Suppose $v_{i'}\in F(v_j)\setminus F(v_i)$, then $i'<i<j$ and $v_i\notin F(v_{i'})$.  Note $v_i,v_{i'}\in F(v_j)$ and all vertices of  $F(v_j)$ are mutually adjacent, which is a contradiction to $v_{i'}\notin F(v_i)$ and $v_{i}\notin F(v_{i'})$. Thus we have $F(v_j)\setminus F(v_i)=\{v_i\}$, i.e., $|F(v_j)\cap F(v_i)|=r-1$. Moreover, for any  $v_{i'}\in F(v_j)$ with $i'<i$, we have at least $v_{i'}, v_i\notin F(v_{i'})$, i.e.,  $|F(v_j)\cap F(v_{i'})|\le r-2$.

To prove the necessity of the first part, from the assumption $v_{i_j}\notin F(v_{i_1})$ whenever $v_{i_1}\in F(v_{i_2}), \ldots, v_{i_{j-1}}\in F(v_{i_j})$ for some $i_1,\ldots, i_j\in [n]$, we have $v_{i_t}\notin F(v_{i_s})$ for $1\le s< t\le j$, which implies $v_{i_s}\ne v_{i_t}$, i.e., $i_s\ne i_t$ for $1\le s< t\le j$ since $v_{i_s}\in F(v_{i_{s+1}})$ and $v_{i_t}\notin F(v_{i_{s+1}})$, and $j\le n$ consequently. Suppose $v_{i_1}\in F(v_{i_2}), \ldots, v_{i_{k-1}}\in F(v_{i_k})$ for some $i_1,\ldots, i_k\in [n]$, where $k$ is maximal possible. The maximality of $k$ implies $v_{i_k}\notin F(v_{i})$ for all $i\in [n]$. Next we use induction on the size of $V$. When $|V|=1$, note that from (a), $v_1\in F(v_{1})$ produces $v_1\notin F(v_1)$, which forces $v_1\notin F(v_{1})$, i.e., $F(v_{1})=O$ and the result follows clearly. Let $V'=V\setminus \{v_{i_k}\}$ and $F': V'\to  {O\cup V'\choose r}$ with $F'(v_i)=F(v_i)$ for all $v_i\in V'$. It is clear that $(a)$ and $(b)$ holds for $F'$.  From the induction hypothesis, there is an $O$-rooted labeled $r$-tree $T'$ on $O\cup V'$ such that $F'(v_i)=F_{\T'}(v_i)$ with $v_i\in V'$. Given a valid rearrangement $\nu'$ for $T'$, let $T$ be a graph on the labeled vertex set $O\cup V$ obtained from $T'$ by adding the $r$ edges between $v_{i_k}$ and each vertex of $F(v_{i_k})$, and let $\nu=(\nu', v_{i_k})$. The case of $F(v_{i_k})=O$ is clear. Otherwise, from (b) we have $|F(v_{i_k})\cap F(v_i)|=r-1$ for some $v_i\in F(v_{i_k})$. Note all vertices of $F(v_i)\cup\{v_i\}$ are mutually adjacent in $T'$, which implies that all vertices of $F(v_{i_k})$ are also mutually adjacent in $T'$. Consequently, the graph $T$ is an $O$-rooted labeled $r$-tree and $\nu$ is a valid rearrangement for $T$.
\end{proof}

\section{Proof of Theorem \ref{Main-1}}
Roughly speaking, in Definition \ref{definition} the graph $T_{\bm x}$ is obtained by the following process,
\begin{equation*}\label{procedure}
{\bm x}\in \Delta \;\longrightarrow \;p_j, q_j({\rm if~} p_j\ne 0)\;\longrightarrow \;f(v_j)\;\longrightarrow F(v_j)\;\longrightarrow\; T_{\bm x}.
\end{equation*}
which requires that $p_j$, $q_j$(if $p_j\ne 0$), $f(v_j)$, $F(v_j)$, and $T_{\bm x}$ are well-defined for all $j\in [n]$, see Proposition \ref{r-well-defined}.

\begin{proposition}\label{r-well-defined}
With the same notations as Definition {\rm \ref{definition}}, the graph $T_{\bm x}$ is independent of the chioce of ${\bm x}\in \Delta$. Moreover,  $T_{\bm x}$ is an $O$-rooted labeled $r$-tree with $F_{\T_{\bm x}}(v_j)=F(v_j)$ for all $j\in [n]$, namely, the map $\Psi_n^r$ in \eqref{r-Psi} is well defined.
\end{proposition}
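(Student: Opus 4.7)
The plan is to verify the well-definedness chain $(p_j, q_j)\to f(v_j)\to F(v_j)\to T_{\bm x}$ and then apply Proposition \ref{proposition-3} to identify $T_{\bm x}$ as the asserted $O$-rooted labeled $r$-tree. To begin, I would prove that inside any region $\Delta\in \mathcal{R}(\mathcal{S}_n^r)$, the non-diagonal entries of each column slice $\col_j(C_{\bm x})$ are pairwise distinct. For any two index pairs $(i,k)\ne (i',k')$ with $i,i'\ne j$, the equation $c_{ijk}({\bm x})=c_{i'jk'}({\bm x})$ reduces, after a short case analysis on whether each of $i,i'$ lies above or below $j$, to $x_i-x_{i'}=m$ for some integer $m$ with $-r+1\le m\le r$; the resulting hyperplane already belongs to $\mathcal{S}_n^r$ and hence cannot be crossed inside $\Delta$. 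Consequently, if $\col_j(C_{\bm x})$ contains any positive entry, its minimum is unique, and the position $(p_j,q_j)$ is determined by $\Delta$ alone.

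Next I would address termination of the recursion computing $f(v_j)$, which simultaneously yields independence of the representative ${\bm x}$. The key monotonicity is that $p_j\ne 0$ forces $x_{p_j}>x_j$: positivity of $c_{p_j j q_j}({\bm x})$ gives $x_{p_j}-x_j>q_j\ge 1$ when $p_j<j$ and $x_{p_j}-x_j>q_j-1\ge 0$ when $p_j>j$. Therefore the iterated chain $j\mapsto p_j\mapsto p_{p_j}\mapsto\cdots$ strictly increases the $x$-coordinate and must terminate at $0$ within at most $n$ steps. Because $(p_j,q_j)$ is invariant on $\Delta$, an induction along this chain shows that $f(v_j)$, $F(v_j)$, and $T_{\bm x}$ are all well defined and depend only on $\Delta$.

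With well-definedness secured, I would verify conditions (a) and (b) of Proposition \ref{proposition-3} for the map $F$. Unwinding Definition \ref{definition} gives the identity
\[
F(v_j) \;=\; \bigl(F(v_{p_j})\setminus \{f_{q_j}(v_{p_j})\}\bigr)\cup \{v_{p_j}\}\quad\text{whenever } p_j\ne 0,
\]
from which induction along the $p$-chain shows that every $v_i\in F(v_j)\cap V$ satisfies $x_i>x_j$. Property (a) is then immediate: any chain $v_{i_1}\in F(v_{i_2}),\ldots,v_{i_{j-1}}\in F(v_{i_j})$ forces $x_{i_1}>\cdots>x_{i_j}$, precluding $v_{i_j}\in F(v_{i_1})$. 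For (b), taking $v_i=v_{p_j}$ gives $F(v_j)\cap F(v_{p_j})=F(v_{p_j})\setminus \{f_{q_j}(v_{p_j})\}$, of size exactly $r-1$. Proposition \ref{proposition-3} then produces a unique $O$-rooted labeled $r$-tree $T$ with $F_T(v_j)=F(v_j)$, whose edges are precisely the pairs inside the cliques $\{v_j\}\cup F(v_j)$; by construction this coincides with $T_{\bm x}$.

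The main obstacle I anticipate is the opening distinctness step: a priori two entries of a column slice might coincide on $\Delta$, and the argument rests on the fortunate fact that every such coincidence unpacks to a hyperplane already present in $\mathcal{S}_n^r$, which must be verified by a careful enumeration of the position patterns of $i,i'$ relative to $j$. Once this rigidity is in place, everything downstream reduces to the clean monotonicity $x_{p_j}>x_j$ and a direct application of Proposition \ref{proposition-3}.
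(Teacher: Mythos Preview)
Your proposal is correct and follows essentially the same approach as the paper's proof: both establish that the position $(p_j,q_j)$ is constant on $\Delta$ by observing that differences $c_{ijk}({\bm x})-c_{i'jk'}({\bm x})$ are themselves (up to sign) entries $c_{ii's}({\bm x})$ of the cubic matrix and hence have constant sign on the region, then use the monotonicity $x_{p_j}>x_j$ to terminate the recursion, and finally invoke Proposition~\ref{proposition-3}. Your verification of property~(a) via the direct inequality $x_i>x_j$ for $v_i\in F(v_j)\cap V$ is slightly cleaner than the paper's, which instead tracks the iterate $i=\pi^l(j)$, but the content is the same.
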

\begin{proof}
Firstly, we will show that $p_j$ and $q_j$(if $p_j\ne 0$) is independent of the choice of ${\bm x}\in \Delta$. Let $\Delta\in \mathcal{R}(\mathcal{S}_n^r)$ and $j\in [n]$. For the case $\i$ of Definition $\ref{definition}$, by (\ref{r-sign-func}) we have $\sign_{ijk}(\Delta)\ne +$ for all $i\in[n]$ and $k\in [r]$, which implies $p_j=0$ for all ${\bm x}\in \Delta$. For the case $\ii$ of Definition $\ref{definition}$, given any $(i,k)\ne (i',k')$ in $[n]\times [r]$ with $i>i'$, by routine calculations on entries of $C_{{\bm x},j}$ we have
\begin{equation*}\label{r-entry-comparison}
c_{ijk}({\bm x})-c_{i'jk'}({\bm x})
=\left\{
\begin{array}{ll}
c_{ii'(k-k')}({\bm x}), & \hbox{if $k>k'$, $i>j>i'$;} \\
-c_{i'i(k'-k+1)}({\bm x}), & \hbox{if $k\le k'$, $i>j>i'$;}\\
c_{ii'(k-k'+1)}({\bm x}), & \hbox{if $k\ge k'$, $i>i'>j$ or $j>i>i'$;}\\
-c_{i'i(k'-k)}({\bm x}), & \hbox{if $k< k'$, $i>i'>j$ or $j>i>i'$.}
\end{array}
\right.
\end{equation*}
For all ${\bm x}\in \Delta$, we have $\sign\big(c_{ijk}({\bm x})-c_{i'jk'}({\bm x})\big)=\sign\big(c_{ii's}({\bm x})\big)$ or $-\sign\big(c_{i'is}({\bm x})\big)$ for some $s\in [r]$, which means that both $\sign\big(c_{ijk}({\bm x})\big)$ and  $\sign\big(c_{ijk}({\bm x})-c_{i'jk'}({\bm x})\big)\ne 0$ are independent of the choice of ${\bm x}\in \Delta$. It implies that  for all ${\bm x}\in \Delta$, the $j$-th column slice $\col_j(C_{\bm x})$ has a unique minimal positive entry at the same position $(p_j,q_j)$, i.e., $(p_j,q_j)$ is independent of the choice of ${\bm x}\in \Delta$.

Secondly, we will prove that $f(v_j)$ and $F(v_j)$ is well defined.  Let
\begin{equation}\label{pi}
\pi:[n]\to \{0,1,\ldots,n\}\quad {\rm with}\quad \pi(j)=p_j,
\end{equation}
$\pi^{l+1}(j)=\pi(\pi^{l}(j))=p_{\pi^l(j)}$ for $l\ge 0$, and $\pi^0(j)=j$. Note that if $p_j\ne 0$, we have $c_{p_jjq_j}({\bm x})>0$ and
\[
c_{p_jjq_j}({\bm x})
=\left\{
\begin{array}{ll}
x_{p_j}-x_j-q_j, & \hbox{if $p_j<j$;} \\
x_{p_j}-x_j-q_j+1, & \hbox{if $p_j>j$,}
\end{array}
\right.
\]
which implies $x_{p_j}>x_j$ and $p_j\ne j$. Namely, we have $x_{\pi(j)}>x_j$ if $\pi(j)\ne 0$. There exists some $m\in[n]$ such that $\pi(j),\ldots, \pi^{m-1}(j)\ne 0$ and $\pi^m(j)=0$, otherwise $\pi(j), \ldots \pi^n(j)\ne 0$ and $x_j<x_{\pi(j)}<\cdots<x_{\pi^n(j)}$ which is obviously impossible for ${\bm x}=(x_1,\ldots,x_n)$. Moreover $x_j<x_{\pi(j)}<\cdots<x_{\pi^{m-1}(j)}$ implies that $j, \pi(j),\ldots, \pi^{m-1}(j)$ are mutually distinct. It follows from \i and \ii of Definition \ref{definition} that
\[
f(v_{p_{\pi^{m-2}(j)}})=f(v_{\pi^{m-1}(j)})=(o_1,\ldots,o_r),
\]
and for all $l=m-2,\ldots, 1, 0$, we have
\begin{equation}\label{f-recursion}
f(v_{\pi^{l}(j)})=\big(f_1(v_{\pi^{l+1}(j)}),\ldots, f_{q_{\pi^{l}(j)}-1}(v_{\pi^{l+1}(j)}), f_{q_{\pi^{l}(j)}+1}(v_{\pi^{l+1}(j)}), \ldots, f_{r}(v_{\pi^{l+1}(j)}), v_{\pi^{l+1}(j)}\big).
\end{equation}
Proceeding  $l$ from $m-2$ to $0$ step by step recursively,  finally we can obtain $f(v_j)$, which is well-defined consequently.  Moreover, note from the definition that for each $l=m-2,\ldots, 1, 0$,
\[
F(v_{\pi^{l}(j)}) =\big(F(v_{\pi^{l+1}(j)})\setminus f_{q_{\pi^{l}(j)}}(v_{\pi^{l+1}(j)})\big)\,\ccup\, \{v_{\pi^{l+1}(j)}\},
\]
i.e., $F(v_{\pi^{l}(j)})$ is obtained from $F(v_{\pi^{l+1}(j)})$ by removing the vertex $f_{q_{\pi^{l}(j)}}(v_{\pi^{l+1}(j)})$ and adding the vertex $v_{\pi^{l+1}(j)}$. So each $F(v_{\pi^{l}(j)})$ consists of $r$ members of vertices $o_1,\ldots, o_r, v_{\pi^{m-1}(j)},\ldots,v_{\pi^{l+1}(j)}$, which is of size $r$ since $j, \pi(j),\ldots, \pi^{m-1}(j)$ are mutually distinct and nonzero. In particular,  $|F(v_j)|=|F(v_{\pi^{0}(j)})|=r$ and $F(v_j)$ is well defined.

Finally, it remains to show that there exists uniquely an $O$-rooted labeled $r$-tree $T_{\bm x}$ on $O\cup V$ with $F_{\T_\x}(v_j)=F(v_j)$ for all $j\in [n]$. Recall the arguments of the above proof that if $\pi(j)\ne 0$, $F(v_j)$ consists of $r$ members of vertices $o_1,\ldots, o_r, v_{\pi^{m-1}(j)},\ldots,v_{\pi(j)}$, where $m$ is the smallest integer with $\pi^m(j)=0$ and $m\ge 2$. If $v_i\in F(v_j)$, then $\pi(j)=p_j\ne 0$ and
\[
v_i\in \{o_1,\ldots, o_r, v_{\pi^{m-1}(j)},\ldots,v_{\pi(j)}\},
\]
which follows $v_i=v_{\pi^l(j)}$, i.e., $i=\pi^l(j)$ for some positive integer $l\in [m-1]$. Now suppose $v_{i_1}\in F(v_{i_2}), \ldots, v_{i_{j-1}}\in F(v_{i_j})$ for some $i_1,\ldots, i_j\in [n]$. There exist some positive integers $l_1,\ldots, l_{j-1}$ such that
\[
i_1=\pi^{l_1}(i_2), \dots, i_{j-1}=\pi^{l_{j-1}}(i_j).
\]
We have $i_1=\pi^{l}(i_j)$ with $l=l_1+\cdots+l_{j-1}$, i.e., $v_{i_1}=v_{\pi^{l}(i_j)}$ which implies $v_{i_j}\notin F(v_{i_1})$. So the map $F$ satisfies the property (a) of Proposition $\ref{proposition-3}$. The property (b) is obvious since  $v_{p_j}\in F(v_j)$ and $|F(v_j)\cap F(v_{p_j})|=r-1$. The proof completes by Proposition $\ref{proposition-3}$.
\end{proof}
\begin{remark}{\rm  (1) From the above proof,  notations of Definition \ref{definition} can be written more precisely  as
\begin{equation}\label{notations}
p_j=p_j(\Delta),\quad q_j=q_j(\Delta), \quad f=f_\Delta,\quad F=F_\Delta, \And T_{\bm x}=T_\Delta,
\end{equation}
since they are all independent of the choice of ${\bm x}\in \Delta$.  (2) We also have the following observation
\begin{equation}\label{claim}
V=\{v_1,\ldots, v_n\}\nsubseteq \ccup_{j=1}^n F_\Delta(v_j),
\end{equation}
otherwise, we have $v_{i_1}\in F_\Delta(v_{i_2}),v_{i_2}\in F_\Delta(v_{i_3}),\ldots $ for an infinite sequence $i_1, i_2,\ldots$, which is a contradiction since $i_1=\pi^{l_1}(i_2), i_2=\pi^{l_2}(i_3),\ldots$ for some positive integers $l_1,l_2,\ldots$. }
\end{remark}
It is easily seen from \eqref{cubic-matrix} that for any $i,j,k\in [n]$ and $s,t\in [r]$ with $i>j>k$ and $s+t\le r$, we have the following facts on linear relations among the entries of the cubic matrix $C_{\bm x}$,
\begin{eqnarray*}
{\rm (F1)}& c_{ijs}({\bm x})+c_{jkt}({\bm x})=c_{ik(s+t-1)}({\bm x});\quad{\rm (F2)}& c_{iks}({\bm x})+c_{kjt}({\bm x})=c_{ij(s+t)}({\bm x});\\
{\rm (F3)}& c_{kis}({\bm x})+c_{ijt}({\bm x})=c_{kj(s+t-1)}({\bm x});\quad{\rm (F4)}& c_{kjs}({\bm x})+c_{jit}({\bm x})=c_{ki(s+t)}({\bm x});\\
{\rm (F5)}&c_{jks}({\bm x})+c_{kit}({\bm x})=c_{ji(s+t-1)}({\bm x});\quad{\rm (F6)}&c_{jis}({\bm x})+c_{ikt}({\bm x})=c_{jk(s+t)}({\bm x}).
\end{eqnarray*}

\begin{lemma}\label{r-sign-lemma}
If $p_j\ne 0$ and $q_j$ are defined as $\ii$ of Definition {\rm \ref{definition}}, then entries of $C_{\bm x}$ have the following sign relations,
\begin{equation*}
\sign\big({c_{ijk}}(\bm x)\big)
=\left\{
\begin{array}{ll}
\sign\big(c_{ip_j(k-q_j+1)}(\bm x)\big), & \hbox{if $q_j\le k$ and $(i,j,p_j)$ is even;} \\
-\sign\big(c_{p_ji(q_j-k)}(\bm x)\big), & \hbox{if $q_j> k$ and $(i,j,p_j)$ is even;}\\
-\sign\big(c_{p_ji(q_j-k+1)}(\bm x)\big), & \hbox{if $q_j\ge k$ and $(i,j,p_j)$ is odd;}\\
\sign\big(c_{ip_j(k-q_j)}(\bm x)\big), & \hbox{if $q_j<k$ and $(i,j,p_j)$ is odd,}
\end{array}
\right.
\end{equation*}
where $(i,j,p_j)$ is even if $i<j<p_j$, or $p_j<i<j$, or $j<p_j<i$, and odd otherwise.
\end{lemma}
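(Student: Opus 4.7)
The plan rests on the fact that, since $(p_j,q_j)$ is the position of the \emph{unique} minimal positive entry of $\col_j(C_{\bm x})$, the value $c_{p_j j q_j}({\bm x})$ acts as a sign-preserving threshold under subtraction. Precisely, whenever $i,j,p_j$ are pairwise distinct and $k\in[r]$, if $c_{ijk}({\bm x})>0$ then minimality forces $c_{ijk}({\bm x})>c_{p_j j q_j}({\bm x})>0$ (strict, by uniqueness of the minimum), while if $c_{ijk}({\bm x})<0$ then trivially $c_{ijk}({\bm x})<0<c_{p_j j q_j}({\bm x})$. In either case
\[
\sign(c_{ijk}({\bm x}))=\sign(c_{ijk}({\bm x})-c_{p_j j q_j}({\bm x})),
\]
so the lemma reduces to showing that the difference $c_{ijk}({\bm x})-c_{p_j j q_j}({\bm x})$ equals $\pm c_{abc}({\bm x})$ for the very index $(a,b,c)$ appearing on the right-hand side of the lemma, with the sign $\pm$ determined by the parity of the cyclic order of $(i,j,p_j)$.

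The verification of these identities is a direct substitution using the case definition \eqref{cubic-matrix}, equivalently an application of one of the facts (F1)--(F6). For instance, in the ``even'' ordering $i<j<p_j$ one finds
\[
c_{ijk}({\bm x})-c_{p_j j q_j}({\bm x})=(x_i-x_j-k)-(x_{p_j}-x_j-q_j+1)=x_i-x_{p_j}-(k-q_j+1),
\]
which equals $c_{ip_j(k-q_j+1)}({\bm x})$ when $q_j\le k$ (using $i<p_j$), and equals $-c_{p_ji(q_j-k)}({\bm x})$ when $q_j>k$. The two remaining even orderings $p_j<i<j$ and $j<p_j<i$ produce exactly the same two formulas by analogous substitutions, while each of the three odd orderings $i<p_j<j$, $p_j<j<i$, $j<i<p_j$ produces the formulas of the third and fourth cases of the lemma, with the opposite overall sign as predicted.

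To finish I would verify that the third index on the right-hand side lies in $[1,r]$ in every subcase: $k-q_j+1\in[1,r]$ follows from $q_j\le k$ together with $q_j\ge 1$ and $k\le r$; $q_j-k\in[1,r-1]$ follows from $q_j>k$ with $1\le q_j,k\le r$; and the two remaining indices $q_j-k+1$ and $k-q_j$ in the odd cases behave symmetrically. Combined with the threshold observation of the first paragraph, this yields the four claimed sign identities.

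There is no serious conceptual obstacle; the main difficulty is the pure case bookkeeping through the six linear orderings of $\{i,j,p_j\}$ and the two subcases $q_j\lessgtr k$. The principal source of potential error is the extra $+1$ appearing in the $i>j$ branch of \eqref{cubic-matrix}, which shifts the third index of the resulting entry by one and can flip a sign if tracked carelessly; organising the computation through the relations (F1)--(F6), which already encode these shifts, keeps the argument compact.
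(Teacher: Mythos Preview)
Your proposal is correct and follows essentially the same approach as the paper's own proof: both use the unique minimality of $c_{p_jjq_j}(\bm x)$ in $\col_j(C_{\bm x})$ to deduce $\sign(c_{ijk}(\bm x))=\sign(c_{ijk}(\bm x)-c_{p_jjq_j}(\bm x))$, then identify the difference with an entry of $C_{\bm x}$ via the relations (F1)--(F6), carrying out one case explicitly and deferring the remaining orderings to analogous bookkeeping. Your write-up is in fact slightly more thorough than the paper's, since you explicitly verify that the third index on the right-hand side lands in $[r]$ in each subcase.
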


\begin{proof}We prove the result in the case of $q_j\le k$ and $i<j<p_j$ whose arguments can be applied to other cases analogously.
When $i<j<p_j$, by the fact (F3) we have
\[
c_{p_jjq_j}(\bm x)=c_{ijk}(\bm x)-c_{ip_j(k-q_j+1)}(\bm x),
\]
which from the assumption is the unique minimal positive entry in the $j$-th column slice of $C_{\bm x}$. If $c_{ijk}(\bm x)$ is positive, by the unique minimality of $c_{p_jjq_j}(\bm x)$ we have $c_{ijk}(\bm x)>c_{p_jjq_j}(\bm x)$, which implies $c_{ip_j(k-q_j+1)}(\bm x)>0$. If $c_{ijk}(\bm x)$ is negative, by the positivity of $c_{p_jjq_j}(\bm x)$ we have $c_{ip_j(k-q_j+1)}(\bm x)<0$. Namely,  $\sign(c_{ijk}(\bm x))=\sign(c_{ip_j(k-q_j+1)}(\bm x))$.
\end{proof}

Let $\Proj_j:\Bbb{R}^n\to \Bbb{R}^{n-1}$ be the projection defined  by
\[
\Proj_j(x_1,\ldots,x_n)= (x_1,\ldots,x_{j-1},x_{j+1},\ldots,x_n).
\] It is clear that  $\Proj_j(\Delta)\in \mathcal{R}(\mathcal{S}_{n-1}^r)$ for any $\Delta\in \mathcal{R}(\mathcal{S}_n^r)$. Below is a key lemma to prove the injectivity of the map $\Psi_n^r$ of (\ref{r-Psi}).
\begin{lemma}\label{r-uniqueness}
Given $j'\in [n]$, for any $\Delta'\in \mathcal{R}(\mathcal{S}^r_{n-1})$, $i'\in\{0,1,\ldots,j'-1,j'+1,\ldots,n\}$, and $k'\in [r]$ (if $i'\ne 0$), there is at most one region $\Delta\in\mathcal{R}(\mathcal{S}^r_{n})$ such that $\Proj_{j'}(\Delta)=\Delta'$, $p_{j'}(\Delta)=i'$, $q_{j'}(\Delta)=k'$ (if $i'\ne 0$), and $j'\ne p_j(\Delta)$ for all $j\in [n]$, see {\rm (\ref{notations})} for  notations $p_j(\Delta)$ and $q_j(\Delta)$.
\end{lemma}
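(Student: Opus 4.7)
The plan is to prove uniqueness by showing that the hypotheses pin down the entire sign vector $\sign(\Delta) = \bigl(\sign_{ijk}(\Delta)\bigr)$, and then invoke the bijectivity of $\sign$ in \eqref{r-sign-bijection}. Pick any $\bm x \in \Delta$. The entries $c_{ijk}(\bm x)$ with both $i \ne j'$ and $j \ne j'$ form the cubic matrix of $\Delta' = \Proj_{j'}(\Delta)$, so their signs are forced by $\Delta'$. Only the $j'$-th column slice and the $j'$-th row slice of $C_{\bm x}$ remain to be controlled.

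For the column slice, if $i' = 0$ then by definition every entry of $\col_{j'}(C_{\bm x})$ is nonpositive, and hence strictly negative. If $i' \ne 0$, I apply Lemma \ref{r-sign-lemma} with $(p_{j'},q_{j'}) = (i',k')$: this expresses $\sign(c_{ij'k}(\bm x))$, for every $i \ne j'$ and $k \in [r]$, as $\pm$ the sign of an entry of $C_{\bm x}$ whose two matrix indices are $\{i,i'\}$, both different from $j'$. All such entries lie in $\Delta'$'s matrix, so $\col_{j'}(C_{\bm x})$ is completely determined.

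For the row slice, fix $j \ne j'$ and let $(\widehat p_j,\widehat q_j)$ be the position of the unique minimum positive entry in the column of $\Delta'$'s cubic matrix indexed by $j$, or $\widehat p_j = 0$ if no positive entry exists there. The entries of $\col_j(C_{\bm x})$ in rows $i \ne j,j'$ coincide with those of $\Delta'$, so the hypothesis $p_j(\Delta) \ne j'$ forces $p_j(\Delta) = \widehat p_j$ and $q_j(\Delta) = \widehat q_j$ when $\widehat p_j \ne 0$, and forces $p_j(\Delta)=0$ when $\widehat p_j=0$. In the latter case every entry of $\col_j(C_{\bm x})$ is nonpositive, so $\sign(c_{j'jk}(\bm x)) = -$ for all $k$. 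In the former case Lemma \ref{r-sign-lemma} applied with $i = j'$ writes $\sign(c_{j'jk}(\bm x))$ as $\pm$ the sign of an entry whose matrix indices are a permutation of $(j',\widehat p_j)$: if the target lies in $\col_{j'}$ it is already known, otherwise the target is $c_{j'\,\widehat p_j\,k''}(\bm x)$, still in row $j'$ but now in column $\widehat p_j$.

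I then iterate this reduction along the chain $j,\widehat p_j,\widehat p_{\widehat p_j},\dots$, at each step either landing in $\col_{j'}$ (already handled) or passing to the next column. The same strictly-increasing-$x$-coordinate argument used in the well-definedness proof of Proposition \ref{r-well-defined} applied to $\Delta'$ shows this chain visits pairwise distinct indices and hence must terminate at some $j_m$ with $\widehat p_{j_m} = 0$, where the row-$j'$ sign is already known to be $-$. Consequently every entry of $\row_{j'}(C_{\bm x})$ has its sign uniquely forced by $\Delta'$ and the given data $(i',k')$, completing the determination of $\sign(\Delta)$. I expect the delicate point to be organizing the row-slice step so that the recursion on Lemma \ref{r-sign-lemma} is guaranteed to terminate and that the case analysis (which subcase of Lemma \ref{r-sign-lemma} sends us to $\col_{j'}$ versus to $\col_{\widehat p_j}$) does not need to be tracked explicitly; everything else reduces to bookkeeping.
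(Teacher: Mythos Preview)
Your proposal is correct and follows essentially the same strategy as the paper: reduce to showing that $\sign(\Delta)$ is determined, handle the $j'$-th column slice via Lemma~\ref{r-sign-lemma} directly, and handle the $j'$-th row slice by iterated application of Lemma~\ref{r-sign-lemma}. The only difference is organizational: the paper (after specializing to $j'=n$) packages the row-slice recursion as a double induction on $k$ and then downward on $j$, whereas you trace the $\widehat p$-chain and invoke the termination argument from Proposition~\ref{r-well-defined}; the underlying reduction is identical.
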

\begin{proof}We only consider the case of $j'=n$. For general $j'$, the arguments are analogous but more tedious.
Suppose two regions $\Delta, \Omega\in \mathcal{R}(\mathcal{S}_n^r)$ satisfying that $\Proj_n(\Delta)=\Proj_n(\Omega)=\Delta'$ and
\[
p_n(\Delta)=p_n(\Omega)=i',\quad q_n(\Delta)=q_n(\Omega)=k' ~({\rm if~} i'\ne 0),\; \And n\ne p_j(\Delta),~ p_j(\Omega)\For j\in [n].
\]
We will show $\sign(\Delta)=\sign(\Omega)$, which implies $\Delta=\Omega$ since the map $\sign: \mathcal{R}(\mathcal{S}_n^r)\to \{\sign(\Delta)\mid \Delta\in \mathcal{R}(\mathcal{S}_n^r)\}$ is a bijection by \eqref{r-sign-bijection}. Given ${\bm z}\in \Delta'$, let ${\bm x}\in \Delta$ and ${\bm y}\in \Omega$ such that $\Proj_n({\bm x})=\Proj_n({\bm y})={\bm z}$. By \eqref{r-sign-func} we have
$\sign_{ijk}(\Delta)=\sign(c_{ijk}({\bm x}))$ and $ \sign_{ijk}(\Omega)=\sign(c_{ijk}({\bm y}))$.
It is enough to show that  for all $i,j\in [n]$ and $k\in [r]$,
\begin{equation}\label{x=y}
\sign(c_{ijk}({\bm x}))=\sign(c_{ijk}({\bm y})).
\end{equation}
We first claim that for all $j\in [n]$,
\[
p_j(\Delta)=p_j(\Omega)=p_j\quad \And \quad q_j(\Delta)=q_j(\Omega)=q_j ~(\rm {if}~p_j\ne 0).
\]
Indeed, if $j=j'=n$, it is obvious from the assumptions. If $j\ne j'=n$, note  $n\ne p_j(\Delta)$ ($p_j(\Omega)$ resp.) for all $j\in [n]$. By the definitions of $p_j(\Delta)$ and $q_j(\Delta)$ ($p_j(\Omega)$ and $q_j(\Omega)$ resp.),   the minimal positive entry of $\col_j(C_{\bm x})$ ($\col_j(C_{\bm y})$ resp.) never appears in the $n$-th row slice of $C_{\bm x}$ ($C_{\bm y}$ resp.). It follows that $p_j(\Delta)=p_j(\Delta')=p_j(\Omega)$ and $q_j(\Delta)=q_j(\Delta')=q_j(\Omega)$ for $j\ne j'$, so the claim holds.
Notice that (\ref{x=y}) holds if $p_j=0$ since all entries of the $j$-th column slice of $C_{\bm x}$ are nonpositive, more precisely,  $\sign(c_{jjk}({\bm x}))=0$ and $\sign(c_{ijk}({\bm x}))=-$ if $i\ne j$.  Now we assume $p_j\ne 0$ and consider the following cases to prove (\ref{x=y}).
\begin{itemize}
\item[\i] For $i,j\in [n-1]$, since $\Proj_n({\bm x})=\Proj_n({\bm y})={\bm z}$, we have $c_{ijk}({\bm x})=c_{ijk}({\bm y})=c_{ijk}({\bm z})$. Thus \eqref{x=y} holds in this case.
\item[\ii] For $j=n$ and $i\in [n]$, note $p_n(\Delta)=p_n(\Omega)=i'\ne n$ and $q_n(\Delta)=q_n(\Omega)=k'$ (if $i'\ne 0$). If $k'\ge k$ and $i<i'<n$, the 3rd identity of Lemma \ref{r-sign-lemma} implies $\sign(c_{ink}(\bm x))=-\sign(c_{i'i(k'-k+1)}(\bm x))$ and $\sign(c_{ink}(\bm y))=-\sign(c_{i'i(k'-k+1)}(\bm y))$. Note from the case of \i above that $\sign(c_{i'i(k'-k+1)}(\bm x))=\sign(c_{i'i(k'-k+1)}(\bm y))$. Thus \eqref{x=y}  holds in this case. Other cases can be obtained by similar arguments.
\item[\iii] For $i=n$ and $j\in [n]$,  we have the following four cases.\\
(C-1). $q_j\ge k$ and $p_j<j<i=n$. From the 3rd identity of Lemma \ref{r-sign-lemma}, we have
\[
\sign(c_{njk}(\bm x))=-\sign(c_{p_jn(q_j-k+1)}(\bm x)) \quad\And\quad \sign(c_{njk}(\bm y))=-\sign(c_{p_jn(q_j-k+1)}(\bm y)).
\]

(C-2).  $q_j> k$ and $j<p_j<i=n$. From the 2nd identity of Lemma \ref{r-sign-lemma} we have
\[
\sign(c_{njk}(\bm x))=-\sign(c_{p_jn(q_j-k)}(\bm x)) \quad\And\quad \sign(c_{njk}(\bm y))=-\sign(c_{p_jn(q_j-k)}(\bm y)).
\]

(C-3).  $q_j< k$ and $p_j<j<i=n$. From the 4th identity of Lemma \ref{r-sign-lemma}, we have \[
\sign(c_{njk}(\bm x))=\sign(c_{np_j(k-q_j)}(\bm x)) \quad\And\quad \sign(c_{njk}(\bm y))=\sign(c_{np_j(k-q_j)}(\bm y)).
\]

(C-4).  $q_j\le  k$ and $j<p_j<i=n$.  From the 1st identity of  Lemma \ref{r-sign-lemma}, we have
\[
\sign(c_{njk}(\bm x))=\sign(c_{np_j(k-q_j+1)}(\bm x))\quad\And\quad \sign(c_{njk}(\bm y))=\sign(c_{np_j(k-q_j+1)}(\bm y)).
\]
It is obvious from \ii above that  \eqref{x=y} holds in cases (C-1) and (C-2). Next we will show \eqref{x=y} holds in (C-3) and (C-4) simultaneously by induction on $k$. For $k=1$, note that \eqref{x=y} holds if $p_j<j$ by (C-1), and also holds  if $q_j> k=1$ and $j<p_j$ by (C-2). In particular, we have $\sign(c_{n(n-1)1}(\bm x))=\sign(c_{n(n-1)1}(\bm y))$ obviously. The remainder case is $q_j=1$ and $j<p_j$. From the 1st identity of Lemma \ref{r-sign-lemma}, we have $\sign(c_{nj1}(\bm x))=\sign(c_{np_j1}(\bm x))$. Now consider $j$ from $n-2$ to 1 step by step as follows. For $j=n-2$, we have $p_j=n-1$ since $j<p_j$, and
$\sign(c_{n(n-2)1}(\bm x))=\sign(c_{n(n-1)1}(\bm x))=\sign(c_{n(n-1)1}(\bm y))=\sign(c_{n(n-2)1}(\bm y))$. For $j=n-3$, we have $p_j=n-1$ or $n-2$ since $j<p_j$, and
$\sign(c_{nj1}(\bm x))=\sign(c_{np_j1}(\bm x))=\sign(c_{np_j1}(\bm y))=\sign(c_{nj1}(\bm y))$. Continuing above steps,  we finally obtain $\sign(c_{nj1}(\bm x))=\sign(c_{nj1}(\bm y))$ for any $j\in [n]$, which proves  (\ref{x=y}) for $k=1$.   Now suppose (\ref{x=y}) holds in (C-3) and (C-4)  for $1,\ldots, k-1$. For general $k$,  since $k-q_j<k$, it is clear from the induction hypothesis  that \eqref{x=y} holds in (C-3). In particular, by (C-1) and (C-3) we have  $\sign(c_{n(n-1)k}(\bm x))=\sign(c_{n(n-1)k}(\bm y))$ for all $k\in [r]$. For the case  (C-4),  if $q_j> 1$, then $k-q_j+1<k$ and (\ref{x=y}) holds in this case by the induction hypothesis. So we only need to consider the case of $q_j=1$ and $j<p_j$ for (C-4),  i.e.,
\[
\sign(c_{njk}(\bm x))=\sign(c_{np_jk}(\bm x))\quad\And\quad \sign(c_{njk}(\bm y))=\sign(c_{np_jk}(\bm y)).
\]
Similar as the base case $k=1$, we may  consider $j$ from $n-2$ to 1 step by step, which can prove \eqref{x=y}   in this case. E.g. if $j=n-2<p_j$,  then $p_j=n-1$ and  we have $\sign(c_{n(n-2)k}(\bm x))=\sign(c_{n(n-1)k}(\bm x))=\sign(c_{n(n-1)k}(\bm y))=\sign(c_{n(n-2)k}(\bm y))$.  The proof of (\ref{x=y}) in case \iii completes.
\end{itemize}
\end{proof}

\begin{proposition}\label{r-injection}
The map $\Psi_n^r$ in {\rm (\ref{r-Psi})} is injective.
\end{proposition}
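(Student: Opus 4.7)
The plan is to argue by induction on $n$. The base case $n=1$ is vacuous: $\mathcal{S}_1^r$ has no hyperplanes, so $|\mathcal{R}(\mathcal{S}_1^r)|=1$. For the inductive step, assume $\Psi_{n-1}^r$ is injective, and suppose $\Delta,\Omega\in\mathcal{R}(\mathcal{S}_n^r)$ satisfy $T_\Delta=T_\Omega$. Proposition \ref{proposition-1} ensures $F_\Delta(v_j)=F_\Omega(v_j)$ for every $j\in [n]$. Using observation \eqref{claim}, I would pick $j'\in [n]$ with $v_{j'}\notin\ccup_{j=1}^n F_\Delta(v_j)$. Since $p_j\ne 0$ forces $v_{p_j}\in F_\Delta(v_j)$, this choice gives $j'\ne p_j(\Delta)$ and $j'\ne p_j(\Omega)$ for every $j\in [n]$, matching the last hypothesis of Lemma \ref{r-uniqueness}.

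The core step is to reduce to dimension $n-1$ through the projections $\Delta'=\Proj_{j'}(\Delta)$ and $\Omega'=\Proj_{j'}(\Omega)$ in $\mathcal{R}(\mathcal{S}_{n-1}^r)$. I would verify that $T_{\Delta'}$ coincides with the subgraph of $T_\Delta$ obtained by deleting $v_{j'}$ together with its $r$ incident edges. For each $j\ne j'$, the column slice $\col_j(C_{\bm z})$ with $\bm z\in\Delta'$ is precisely $\col_j(C_{\bm x})$ with $\bm x\in\Delta$ and $\Proj_{j'}(\bm x)=\bm z$, but with the row $i=j'$ deleted; since $j'\ne p_j(\Delta)$, deleting that row preserves the location of the minimal positive entry, so $p_j(\Delta')=p_j(\Delta)$ and $q_j(\Delta')=q_j(\Delta)$ when $p_j(\Delta)\ne 0$. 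An induction along the recursion \eqref{f-recursion} then transfers this to $f_{\Delta'}(v_j)=f_\Delta(v_j)$ and $F_{\Delta'}(v_j)=F_\Delta(v_j)$ for $j\ne j'$. The same holds for $\Omega$, so $T_\Delta=T_\Omega$ yields $T_{\Delta'}=T_{\Omega'}$, and the induction hypothesis gives $\Delta'=\Omega'$.

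To close the argument, I would check that $\Delta$ and $\Omega$ agree on the data at the deleted index $j'$, enabling Lemma \ref{r-uniqueness}. The value $i':=p_{j'}(\Delta)=p_{j'}(\Omega)$ is determined by $T_\Delta=T_\Omega$ through Proposition \ref{proposition-3}(b) as the unique father of $v_{j'}$ sharing $r-1$ fathers with $v_{j'}$ itself. If $i'=0$, there is nothing further to match. If $i'\ne 0$, then $i'\ne j'$, so $v_{i'}$ survives the projection and the previous paragraph gives $f_\Delta(v_{i'})=f_{\Delta'}(v_{i'})=f_{\Omega'}(v_{i'})=f_\Omega(v_{i'})$; the entry deleted from $f(v_{i'})$ when forming $f(v_{j'})$ is the unique vertex in $F(v_{i'})\setminus F(v_{j'})$, a purely set-theoretic datum, so its position satisfies $q_{j'}(\Delta)=q_{j'}(\Omega)=:k'$. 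Lemma \ref{r-uniqueness} applied to $j'$, $\Delta'=\Omega'$, $i'$, and $k'$ then forces $\Delta=\Omega$.

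The main obstacle I foresee is the verification in the second paragraph that the tree construction commutes with $\Proj_{j'}$ when $v_{j'}$ is leaf-like in $T_\Delta$. This requires careful bookkeeping of how the minimal-positive-entry positions behave under the deletion of a single row of each column slice, plus a short recursive check that the $f$-tuples are unaffected away from index $j'$; once this is secured, Lemma \ref{r-uniqueness} does all the remaining work.
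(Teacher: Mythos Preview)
Your proposal is correct and follows essentially the same architecture as the paper's proof: induction on $n$, pick $j'$ via \eqref{claim}, project by $\Proj_{j'}$ to invoke the inductive hypothesis, then apply Lemma~\ref{r-uniqueness}. The only minor difference is in ordering: the paper first proves $p_j(\Delta)=p_j(\Omega)$ and $q_j(\Delta)=q_j(\Omega)$ for \emph{all} $j$ directly (via Proposition~\ref{proposition-3} and a recursion along the $\pi$-chain), whereas you postpone the check at $j=j'$ until after obtaining $\Delta'=\Omega'$ and then read off $q_{j'}$ from the equality $f_\Delta(v_{i'})=f_{\Delta'}(v_{i'})=f_{\Omega'}(v_{i'})=f_\Omega(v_{i'})$---a slightly more economical route to the same conclusion.
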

\begin{proof}
We will use induction on the dimension $n\ge 2$. For the induction base $n=2$, it is easy to see that under the map $\Psi_2^r$, all $2r+1$ regions of $\mathcal{S}_2^r$ are 1-1 corresponding to $O$-rooted labeled $r$-trees in $\mathcal{T}_2^r$. Suppose the result holds for $n-1$, i.e., if $T_\Delta'=\Psi_{n-1}^r(\Delta')=\Psi_{n-1}^r(\Omega')=T_\Omega'$ for any two regions $\Delta', \Omega'\in \mathcal{R}(\mathcal{S}_{n-1}^r)$, then we have $\Delta'=\Omega'$. Now suppose $\Delta, \Omega\in \mathcal{R}(\mathcal{S}_{n}^r)$ with $T_\Delta=T_\Omega=T\in \mathcal{T}_n^r$. By Proposition \ref{proposition-1} and Definition \ref{definition}, we have $F_\Delta(v_j)=F_\Omega(v_j)=F_\T(v_j)$ for all $j\in [n]$. We claim that $p_j(\Delta)= p_j(\Omega)$ and $q_j(\Delta)= q_j(\Omega)$ (if $p_j(\Delta)= p_j(\Omega)\ne 0$) for all $j\in [n]$, whose proof will be given later. From (\ref{claim}), we can take a vertex $v_{j'}\notin F_\Delta(v_j)=F_\Omega(v_j)$ for all $j\in [n]$. It is clear that
\begin{itemize}
\item[(a)] $j'\ne p_j(\Delta)$, $j'\ne p_j(\Omega)$ {\rm for all} $j\in [n]$.
\item[(b)] $p_{j'}(\Delta)=p_{j'}(\Omega)=i'$, $q_{j'}(\Delta)=q_{j'}(\Omega)=k'$ (if $i'\ne 0$).
\end{itemize}
Let $T'\in \mathcal{T}_{n-1}^r$ be the $O$-rooted labeled $r$-tree obtained from $T$ by removing the vertex $v_{j'}$ and the edges between $v_{j'}$ and vertices of $F_\T(v_{j'})$. Given ${\bm x}\in \Delta$ and ${\bm y}\in\Omega$, let
\begin{eqnarray*}
\Proj_{j'}(\Delta)=\Delta',\quad\Proj_{j'}({\bm x})={\bm x'},\quad \Proj_{j'}(\Omega)=\Omega',\quad \Proj_{j'}({\bm y})={\bm y'}.
\end{eqnarray*}
It follows from $\Proj_{j'}({\bm x})={\bm x'}$ that the cubic matrix $C_{\bm x'}\in \Bbb{R}^{(n-1)\times(n-1)\times r}$ is obtained from $C_{\bm x}\in \Bbb{R}^{n\times n\times r}$  by removing the $j'$-th column and row slices from $C_{\bm x}$. The above property (a) implies that for each $j\in [n]\setminus\{j'\}$, the minimal positivity entry $c_{p_jjq_j}(\bm x)$ of $\col_j(C_{\bm x})$  never appears in the $j'$-th row slice of $C_{\bm x}$. So for $j\in [n]\setminus\{j'\}$, the minimal positivity entry of $j$-th column slice $\col_j(C_{\bm x})$ appears in the same position as $\col_j(C_{\bm x'})$, i.e., $p_j(\Delta)=p_j(\Delta')=p_j$ and $q_j(\Delta)=q_j(\Delta')=q_j$ (if $p_j\ne0$). By the definition of $O$-rooted labeled $r$-tree in Definition \ref{definition}, we have $T'=T_{\bm x'}=\Psi_{n-1}^r(\Delta')$, and $T'=T_{\bm y'}=\Psi_{n-1}^r(\Omega')$ similarly. From the induction hypothesis, we obtain $\Delta'=\Omega'$, i.e., $\Proj_{j'}(\Delta)=\Proj_{j'}(\Omega)=\Delta'$. Combining with the above properties (a) and (b), Lemma \ref{r-uniqueness} implies $\Delta=\Omega$.

To prove the claim under the assumption $F_\Delta(v_j)=F_\Omega(v_j)=F_\T(v_j)$ for all $j\in [n]$, note
\[
p_j(\Delta)=0\quad\Leftrightarrow\quad F_\Delta(v_j)=O=F_\Omega(v_j)\quad\Leftrightarrow\quad p_j(\Omega)=0.
\]
Now for $p_j(\Delta)\ne 0$, we have $p_j(\Omega)\ne 0$ and from the Definition \ref{definition},
\[
v_{p_j(\Delta)}\in F_\Delta(v_j)=F_\Omega(v_j)\subseteq F_\Omega(v_{p_j(\Omega)})\ccup v_{p_j(\Omega)}.
\]
Suppose $p_j(\Delta)\ne p_j(\Omega)$. Then we have $v_{p_j(\Delta)}\in F_\Omega(v_{p_j(\Omega)})=F_\T(v_{p_j(\Omega)})$, and symmetrically $v_{p_j(\Omega)}\in F_\Delta(v_{p_j(\Delta)})=F_\T(v_{p_j(\Delta)})$, which is impossible by Proposition \ref{proposition-3}. Thus $p_j(\Delta)= p_j(\Omega)$ for all $j\in [n]$. Define $\pi:[n]\to \{0,1,\ldots,n\}$ with $\pi(j)=p_j(\Delta)= p_j(\Omega)$ as (\ref{pi}). If $\pi_j\ne 0$, we have shown that for some $m\ge 2$, we have $\pi(j),\ldots, \pi^{m-1}(j)\ne 0$ and $\pi^m(j)=0$. Next we prove $q_j(\Delta)= q_j(\Omega)$. Let's start with the convenient notations $({\bm u};i,u)$ and $[{\bm u};i,u]$ for an $r$-tuple ${\bm u}=(u_1,\ldots, u_r)$, an element $u$, and $i\in [r]$, where
\[
({\bm u};i,u)=(u_1,\ldots,u_{i-1},u_{i+1},\ldots, u_r,u)\;\And\;
[{\bm u};i,u]=\{u_1,\ldots,u_{i-1},u_{i+1},\ldots, u_r,u\}.
\]
It is easy to see that if $u_1,\ldots, u_r$ and $u$ are mutually distinct, then the following result holds
\begin{equation}\label{u-i-u}
[{\bm u};i,u]=[{\bm u};j,u]\quad \Leftrightarrow\quad i=j\quad \Leftrightarrow\quad({\bm u};i,u)=({\bm u};j,u).
\end{equation}
It follows from \i and \ii of Definition \ref{definition} that
\[
f_\Delta(v_{\pi^{m-1}(j)})=O=f_\Omega(v_{\pi^{m-1}(j)}),
\]
For any $l=m-2,\ldots, 1, 0$, it is clear from (\ref{f-recursion}) that
\begin{eqnarray*}
F_\Delta(v_{\pi^{l}(j)})&=&\left[f_\Delta(v_{\pi^{l+1}(j)}); q_{\pi^{l}(j)}(\Delta),v_{\pi^{l+1}(j)}\right],\\ F_\Omega(v_{\pi^{l}(j)})&=&\left[f_\Omega(v_{\pi^{l+1}(j)});q_{\pi^{l}(j)}(\Omega),v_{\pi^{l+1}(j)}\right],
\end{eqnarray*}
and $F_\Delta(v_{\pi^{l}(j)})=F_\Omega(v_{\pi^{l}(j)})$. Applying \eqref{u-i-u} and running $l$ from $m-2$ to $0$, we finally obtain
\[
q_j(\Delta)= q_j(\Omega)\quad\And \quad f_\Delta(v_j)=f_{\Omega}(v_j).
\]
\end{proof}
{\bf Proof of Theorem \ref{Main-1}.} We have obtained that the map $\Psi_n^r:\mathcal{R}(\mathcal{S}_n^r)\rightarrow \mathcal{T}_{n}^r$ of (\ref{r-Psi}) is well defined by Proposition \ref{r-well-defined} and injective by Proposition \ref{r-injection}, which is enough to conclude that $\Psi_n^r$ is a bijection from the fact that both $\mathcal{R}(\mathcal{S}_n^r)$ and $\mathcal{T}_{n}^r$ have the same cardinality by Theorem \ref{Shi} and Theorem \ref{rooted labeled r-tree}.

\begin{corollary}\label{r-existence}
Given $j'\in [n]$, for any $\Delta'\in \mathcal{R}(\mathcal{S}_{n-1}^r)$, $i'\in\{0,1,\ldots,j'-1,j'+1,\ldots,n\}$ and $k'\in [r]$ (if $i'\ne 0$), there is a region $\Delta\in \mathcal{R}(\mathcal{S}_{n}^r)$ such that  $\Proj_{j'}(\Delta)=\Delta'$, $p_{j'}(\Delta)=i'$, $q_{j'}(\Delta)=k'$ (if $i'\ne 0$), and $j'\ne p_j(\Delta)$ for all $j\in [n]$.
\end{corollary}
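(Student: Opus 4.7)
The plan is to invert the bijection $\Psi_n^r$ of Theorem \ref{Main-1}: I will build the $O$-rooted labeled $r$-tree $T \in \mathcal{T}_n^r$ corresponding to the triple $(\Delta',i',k')$, set $\Delta := (\Psi_n^r)^{-1}(T)$, and verify the four required conditions. Combined with Lemma \ref{r-uniqueness}, this will give a bijection between such triples and the regions of $\mathcal{S}_n^r$ satisfying $j' \ne p_j(\Delta)$ for every $j\in[n]$.

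I would start by taking $T' = \Psi_{n-1}^r(\Delta') \in \mathcal{T}_{n-1}^r$ on the vertex set $O\cup(V\setminus\{v_{j'}\})$ and extending it to a graph $T$ on $O\cup V$ by adjoining $v_{j'}$ together with the $r$ edges from $v_{j'}$ to the members of
\[
F(v_{j'}) = \begin{cases} O, & \text{if } i'=0,\\ \big[f_{T'}(v_{i'});\,k',\,v_{i'}\big], & \text{if } i'\ne 0,\end{cases}
\]
in the notation of Section 4. The set $F(v_{j'})$ is a clique in $T$: for $i'=0$ because $O$ is itself an $r$-clique, and for $i'\ne 0$ because $\{v_{i'}\}\cup F_{T'}(v_{i'})$ is an $(r+1)$-clique of $T'$, so any $r$-subset of it containing $v_{i'}$ is a clique. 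Appending $v_{j'}$ to any valid rearrangement of $T'$ produces a valid rearrangement for $T$, hence $T\in\mathcal{T}_n^r$ and $\Delta := (\Psi_n^r)^{-1}(T)$ is well defined.

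The four properties are then checked in the following order. (1) By construction $v_{j'}$ is adjacent in $T$ only to vertices of $F(v_{j'})$, so $v_{j'}\notin F_T(v_j)=F_\Delta(v_j)$ for $j\ne j'$, while Proposition \ref{proposition-3}(a) gives $v_{j'}\notin F_T(v_{j'})$; since $v_{p_j(\Delta)}\in F_\Delta(v_j)$ whenever $p_j(\Delta)\ne 0$ by Definition \ref{definition}, we obtain $p_j(\Delta)\ne j'$ for all $j\in[n]$. (2) Property (1) implies that the minimal positive entry of each column slice $\col_j(C_{\bm x})$ avoids row $j'$, so deleting the $j'$-th row and column slices of $C_{\bm x}$ preserves the positions $(p_j,q_j)$ for every $j\ne j'$; consequently $\Psi_{n-1}^r(\Proj_{j'}(\Delta))$ has the same father sets on $V\setminus\{v_{j'}\}$ as $T'$, and injectivity of $\Psi_{n-1}^r$ yields $\Proj_{j'}(\Delta)=\Delta'$. (3) Invoking the characterization used in the proof of Proposition \ref{r-injection}, $v_{p_{j'}(\Delta)}$ is the unique vertex $v_i\in F_T(v_{j'})$ with $|F_T(v_{j'})\cap F_T(v_i)|=r-1$ (Proposition \ref{proposition-3}(b)); by construction this vertex is $v_{i'}$, so $p_{j'}(\Delta)=i'$. (4) Property (2) gives $f_\Delta(v_{i'})=f_{T'}(v_{i'})$, and matching the recursion $f_\Delta(v_{j'})=[f_\Delta(v_{i'});q_{j'}(\Delta),v_{i'}]$ of Definition \ref{definition} against the construction $F(v_{j'})=[f_{T'}(v_{i'});k',v_{i'}]$ through the injectivity observation \eqref{u-i-u} forces $q_{j'}(\Delta)=k'$.

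The subtlest step is (4): unlike $p_{j'}(\Delta)$, which is determined by the unordered father-set data through Proposition \ref{proposition-3}(b), the integer $q_{j'}(\Delta)$ is defined through the cubic matrix and must be recovered by matching the ordered tuple $f_\Delta(v_{j'})$ against the ordered tuple prescribed for $F(v_{j'})$. I expect that step to mirror closely the final paragraph of the proof of Proposition \ref{r-injection}.
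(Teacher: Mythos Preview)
Your proposal is correct and follows exactly the route the paper indicates: the paper simply remarks that Corollary~\ref{r-existence} ``can be easily obtained from the surjectivity of $\Psi_n^r$'' and gives no further detail, so what you have written is a faithful expansion of that one-line argument. The verification steps (1)--(4) are all sound, and your use of Proposition~\ref{proposition-3}(b) together with \eqref{u-i-u} to recover $p_{j'}(\Delta)$ and $q_{j'}(\Delta)$ is precisely the mechanism used in the proof of Proposition~\ref{r-injection}; note only the minor notational slip that $f_\Delta(v_{j'})$ is the ordered tuple $(f_\Delta(v_{i'});q_{j'}(\Delta),v_{i'})$, while it is the associated set $F_\Delta(v_{j'})=[f_\Delta(v_{i'});q_{j'}(\Delta),v_{i'}]$ that you compare to $F(v_{j'})$ via \eqref{u-i-u}.
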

Note that Corollary \ref{r-existence} can be easily obtained from the surjectivity of $\Psi_n^r$. Recall Lemma \ref{r-uniqueness} that the uniqueness of the region $\Delta\in \mathcal{R}(\mathcal{S}_{n}^r)$ (if exists) is crucial to guarantee the injectivity of $\Psi_n^r$ in Proposition \ref{r-injection}. As a parallel situation, the existence of such region  $\Delta$ in Corollary \ref{r-existence} will produce a proof on the surjectivity of $\Psi_n^r$. However, similar as the Pak-Stanley labeling at the very beginning appeared in \cite{Stanley2}, we currently have no direct proof on the surjectivity of $\Psi_n^r$ without using Theorem \ref{Shi} and Theorem \ref{rooted labeled r-tree}.  So it would be of great interest to find a direct proof on Corollary \ref{r-existence}.

Recall arguments of  Proposition  \ref{r-injection} and  Lemma  \ref{r-uniqueness}, which provide an algorithm to construct  the region $\Delta$ of $r$-Shi arrangement from an $O$-rooted labeled $r$-tree $T$, i.e.,  the inverse map
\[
(\Psi_n^r)^{-1}: \mathcal{T}_n^r\to \mathcal{R}(\mathcal{S}_{n}^r), \quad\quad (\Psi_n^r)^{-1} (T)=\Delta.
\]
As a brief look, below is a small example to illustrate the bijection $\Psi_n^r$ by constructing the $O$-rooted labeled tree from a given region, and its inverse $(\Psi_n^r)^{-1}$ by constructing the region from a given $O$-rooted labeled tree.

\begin{example}\label{examp1}
{\rm For $n=3$ and $r=1$,  Figure\,-\ref{figure1} describes the complete correspondence between $\mathcal{R}(\mathcal{S}_3)$ and $\mathcal{T}_3$ under the map $\Psi_3$.
\begin{figure}[ht]
\centering
\includegraphics[width=10cm,height=7cm]{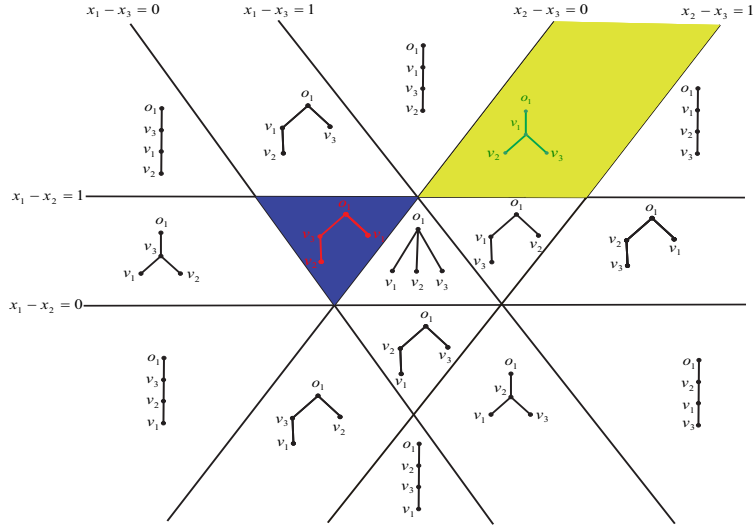}
\hspace{2cm}\caption{The bijection $\Psi_3:\mathcal{R}(\mathcal{S}_3)\to \mathcal{T}_3$}
\label{figure1}
\end{figure}
E.g., let $\Delta\in \mathcal{R}(\mathcal{S}_3)$ be the blue region in Figure\,-\ref{figure1} defined by
\[\Delta=\{0<x_1-x_2<1;\;0<x_1-x_3<1;\;x_2-x_3<0\},\]
and $\bm x=(0.2,-0.2,0)\in \Delta$. By Theorem \rm{\ref{Main-1}}, we have
\[
A_{\bm x}=\begin{pmatrix}

0    & -0.6 & -0.8\\
-0.4 & 0    & -1.2\\
-0.2 & 0.2  & 0
\end{pmatrix}
,
\]
and $p_1(\Delta)=0, p_2(\Delta)=3, p_3(\Delta)=0$. Namely, in the $O$-rooted labeled tree $T_{\bm x}=\Psi_3(\Delta)$, the fathers of $v_1,v_2,$ and $v_3$  are $o_1,v_3,$ and $o_1$ respectively, which exactly determines $T_{\bm x}$ to be the red tree in Figure \ref{figure1}. Conversely, let $T\in\mathcal{T}_3$ be the green tree in Figure\,-\ref{figure1} having $o_1,v_1,$ and $v_1$ as the fathers of $v_1,v_2,$ and $v_3$ respectively. If $\Omega=\Psi_3^{-1}(T)$, it follows from the definition of $T$ in Theorem \rm{\ref{Main-1}} that $p_1(\Omega)=0$, $p_2(\Omega)=1$, and $p_3(\Omega)=1$. Take the leaf $v_3$ of $T$. Let $T'$ be the tree obtained from $T$ by removing $v_3$ and the edge $v_3\sim v_1$, and $\Omega'=\Proj_3(\Omega)$. According to the proofs of Lemma \ref{r-uniqueness} and Proposition \ref{r-injection}, we have $\Psi_2^{-1}(T')=\Omega'$ and
\[
\sign(\Omega')=\begin{pmatrix} 0&+\\-&0\end{pmatrix}=\begin{pmatrix} \sign_{11}(\Omega)&\sign_{12}(\Omega)\\\sign_{21}(\Omega)&\sign_{22}(\Omega)\end{pmatrix}.
\]
Since $p_3(\Omega)=1$ and $p_1(\Omega)=0$, we have $\sign_{13}(\Omega)=+$ and $\sign_{31}(\Omega)=-$. By Lemma \ref{r-sign-lemma}, we have $\sign_{23}(\Omega)=\sign_{21}(\Omega)=-$ and $\sign_{32}(\Omega)=-\sign_{13}(\Omega)=-$. The sign matrix $\sign(\Omega)=\big(\sign_{ij}(\Omega)\big)_{3\times 3}$ exactly determines $\Omega$ to be the yellow region in Figure\,-\ref{figure1}.}
\end{example}

%\section{Bijection $\mathcal{R}(\mathcal{C}_n)\to \mathfrak{S}_n\times \mathcal{D}_n$ }

\end{document}